
\documentclass[a4paper,12pt,reqno]{amsart}

\usepackage[T1]{fontenc}
\usepackage{lmodern}
\usepackage[utf8]{inputenc}

\usepackage[left=2.2cm,right=2.2cm,top=2.2cm,bottom=2.7cm]{geometry}
\usepackage{amsthm,amsmath,amsfonts,amssymb,mathtools,extarrows}
\usepackage[bbgreekl]{mathbbol}
	\DeclareSymbolFontAlphabet{\mathbb}{AMSb}
	\DeclareSymbolFontAlphabet{\mathbbl}{bbold}
\usepackage[numbers]{natbib}
\usepackage[
	colorlinks,
	hyperfootnotes=true,
	hyperindex,
	linkcolor=blue,
	]{hyperref}
\usepackage{xcolor} 


\allowdisplaybreaks

\numberwithin{equation}{section}


\theoremstyle{plain}
\newtheorem{theorem}{Theorem}[section]
\newtheorem{lemma}[theorem]{Lemma}

\theoremstyle{definition}

\theoremstyle{remark}
\newtheorem{remark}[theorem]{Remark}


\newcommand{\N}{\mathbb{N}}
\newcommand{\Zz}{\mathbb{Z}}
\newcommand{\R}{\mathbb{R}}

\newcommand{\Pp}{\mathbb{P}}
\newcommand{\Un}{\mathbbl{1}}
\newcommand{\D}{\mathbb{D}}
\newcommand{\E}{\mathbb{E}}
\newcommand{\Var}{\operatorname{Var}}

\newcommand{\Vol}{\operatorname{Vol}}

\newcommand{\ensemble}[1]{ \left\lbrace #1 \right\rbrace } 
\newcommand{\prth}[1]{\!\left( #1 \right) }
\newcommand{\abs}[1]{\left| #1 \right|}  
\newcommand{\norm}[1]{\left\Vert #1 \right\Vert}  

\newcommand{\Unens}[1]{ \Un_{ \ensemble{#1} } }




%
\title[Non-uniform Berry--Esseen bounds]{Non-uniform Berry--Esseen bounds \\ for Gaussian, Poisson and Rademacher processes}
\author[M. Butzek]{Marius Butzek}
\address{Faculty of Mathematics, Ruhr University Bochum, Germany.}
\email{marius.butzek@rub.de}
\author[P. Eichelsbacher]{Peter Eichelsbacher}
\address{Faculty of Mathematics, Ruhr University Bochum, Germany.}
\email{peter.eichelsbacher@rub.de}
%
%


\begin{document}
\begin{abstract}
In this paper we obtain non-uniform Berry-Esseen bounds for normal approximations by the Malliavin-Stein method.
The techniques rely on a detailed analysis of the solutions of Stein's equations and will be applied to functionals of a Gaussian process like
multiple Wiener-It\^o integrals, to Poisson functionals as well as to the Rademacher chaos expansion. Second-order Poincar\'e inequalities
for normal approximation of these functionals are connected with non-uniform bounds as well. As applications, elements living inside a fixed Wiener
chaos associated with an isonormal Gaussian process, like the discretized version of the quadratic variation of a fractional Brownian motion, are considered. 
Moreover we consider subgraph counts in random geometric graphs as an example of Poisson $U$-statistics, as well as subgraph counts in the Erd\H{o}s-R\'enyi random graph and infinite weighted 2-runs as examples of functionals of Rademacher variables.
\end{abstract}	

\maketitle
\section{Introduction}

The classical CLT is stated as follows: We consider a sequence $(X_k)_{k \in \N}$ of independent and identically distributed (i.i.d.) random variables with expectation $\mu := \mathbb{E}[X_1] < \infty$ and variance $\sigma^2 := \Var(X_1) < \infty$, for $n \in \N$ the $n$-th partial sum $S_n := X_1 + \ldots + X_n$ as well as the standardized $n$-th partial sum $W_n := (S_n - n \mu)/ \sqrt{n \sigma^2}$, then
\begin{align}\label{CLT_classic}
	W_n \overset{d}{\longrightarrow} Z \sim \mathcal{N}(0,1),
\end{align}
which means that $W_n$ converges in distribution to a standard-normal distributed random variable $Z$ as $n$ tends to infinity. The distribution function of a \textit{normal distributed} random variable $X \sim \mathcal{N}(\mu, \sigma^2)$, also known as \textit{Gaussian distribution}, is given by
\begin{align*}
\mathbb{P}(X \leq x) = \frac{1}{\sqrt{2 \pi \sigma^2}} \int_{-\infty}^x e^{\frac{(t - \mu)^2}{2 \sigma^2}} dt,
\end{align*}
we call the case $Z \sim \mathcal{N}(0,1)$ \textit{standard-normal} and define $\Phi(x) := \mathbb{P}(Z \leq x)$. 
\textsc{A. C. Berry} and \textsc{C.--G. Esseen} proved that under the assumption, that the third absolute moments of $X_1, \ldots, X_n$ are finite, one has the following bound, see \cite{B41} and \cite{E42},
\begin{align}\label{Berry_Esseen_classic}
	\sup_{x \in \R} \abs{ \mathbb{P}(W_n \leq x) - \mathbb{P}(Z \leq x) } & \leq \frac{C \cdot \mathbb{E}\abs{X_1}^3}{\sqrt{n}},
\end{align}
where $C$ is a constant. The left-hand side of \eqref{Berry_Esseen_classic} is essentially the difference of the distribution functions of $W_n$ and $Z$ and is called Kolmogorov distance. Note, that we have to distinguish \textit{uniform} and \textit{non-uniform} Berry--Esseen bounds. While uniform bounds as \eqref{Berry_Esseen_classic} have a supremum, here over all $x \in \R$, this is not the case for non-uniform bounds. As a consequence the right-hand side of such bounds has an additional prefactor depending on our real variable $x$, e.g. for independent and not necessarily identically distributed random variables is given as follows:
\begin{align}\label{NU_Bikelis}
\abs{\Pp(W_n \leq x) - \mathbb{P}(Z \leq x)} \leq C \sum_{i=1}^n  \frac{\mathbb{E}\abs{X_i}^3 }{1 + \abs{x}^3}.
\end{align}
Over the years a lot of primarily uniform, but also non-uniform Berry--Esseen results were developed for various applications.
The first bounds of this type came from Esseen himself in 1945 for independent and identically distributed (iid) random variables with finite third moments. They were improved by Nagaev in 1965 and generalized by Bikelis in 1966 for independent and not necessarily identically distributed random variables, which is \eqref{NU_Bikelis}.

In 2001, Chen and Shao \cite{CS01} generalized \eqref{NU_Bikelis} and proved their bound without assuming the existence of third moments, and truncating their random variables at 1:
\begin{align}\label{NU_CS}
\abs{\Pp(W \leq x) - \Phi(x)} \leq C \sum_{i=1}^n \prth{ \frac{\mathbb{E}[X_i^2] \Unens{\abs{X_i} > 1 + \abs{x}}}{(1 + \abs{x})^2} + \frac{\mathbb{E}\abs{X_i}^3 \Unens{\abs{X_i} \leq 1 + \abs{x}}}{(1 + \abs{x})^3}   }.
\end{align}
A few years later Chen und Shao \cite{CS04} established a similar result under local dependence. They obtained both of their results by a combination of Stein's method and the concentration inequality approach. Stein's method is a very useful tool by itself to bound distances of probability distributions and goes back to Charles Stein in 1972 with normal approximation as its original and most famous application.
\\ In the following years, continuations of the work of Chen and Shao can be found in \cite{BC04} for translated Poisson approximation and \cite{CS07} for nonlinear statistics. Moreover papers, which aimed for generalization or improvement of constants as \cite{P07}, \cite{S20}, \cite{KP12} and \cite{KP22}. 
\\ The specific starting point of our paper is \cite{LLWC21}, where Liu et al. showed non-uniform Berry--Esseen bounds for normal and nonnormal approximations by unbounded exchangeable pairs $(W,W')$. They referred to a corresponding uniform bound in \cite{SZ19} and proved their main result without concentration inequalities. Recently their work was generalized in \cite{SS23} for the normal approximation case under the additional assumption of $\mathbb{E}\abs{W-W'}^{2r}$ being of certain order.
\\ When we studied the proof of the main result in \cite{LLWC21}, our observation was the following: The non-uniform bound consists almost of the same terms, which were constructed by the theory of exchangeable pairs, as the uniform bound in \cite{SZ19}, but with a prefactor depending on $x \in \R$. The reason for that is a strict separation between the mentioned terms and the fragments of the solutions of the Stein-equation of the corresponding exchangeable pair. Most of the proof depends on the solution of the Stein-equation, which has to be bounded precisely to lead to the desired prefactor, and not on the exchangeable pair itself. In \cite{CS04},
we will find a simple but important bound on the derivative of the solution of Stein's equation, see \cite[Lemma 5.1]{CS01}. The proof of this Lemma was generalized in
\cite{LLWC21} for non-Gaussian approximations. 

These observations motivates our attempt to generalize the argumentation in \cite{LLWC21} for other applications and techniques related to Stein's method. We consider normal approximation of $L^2$-functionals of centered isonormal Gaussian processes, of $L^2$-functionals of certain Poisson processes as well as of $L^2$-functionals over infinitely many Rademacher random variables. The remaining parts of the paper are organized as follows. In Section 2 we present the preliminaries on Stein's method and the Malliavin-Stein method,
introducing the $L^2$-functionals of Gaussian and Poisson processes as well as of Rademacher random variables and certain operators. We also 
state our non-uniform Berry-Esseen bounds. The proofs will be presented in Section 3. In Section 4 we collect non-uniform Berry-Esseen bounds
in the context of second order Poincar\'e inequalities. Section 5 lists some applications.

The results of the paper are mostly parts of the PhD Thesis of Marius Butzek \cite{B24}, defended on 28th of May 2024. Very recently, in \cite{DVT24}
the authors consider non-uniform bounds via Malliavin-Stein and consider exponential functionals of Brownian motion. 
\bigskip

\section{Preliminaries}
\subsection{Stein's method}
We want to illustrate the main ideas of Stein's method for normal approximation and refer to \cite{BC05} and \cite{CGS11} for an extensive and nice overview. We start with the important characterization \cite[Lemma~2.1]{BC05}, also known as Stein-Lemma,
\begin{align}\label{Steinlemma}
	Z \sim \mathcal{N}(0,1) \Leftrightarrow \mathbb{E}[f^\prime(Z) - Z f(Z)] = 0
\end{align}
for all continuous differentiable $f$ such that the appearing expectations exist. So if a random variable is in some sense close to $\mathcal{N}(0,1)$, it is likely that the right hand side of \eqref{Steinlemma} is close to 0. This motivates the Stein-equation, written in the case of Kolmogorov distance, namely
\begin{align}\label{Steinequation}
	f_z^\prime(F) - F f_z(F) = \Unens{F \leq z} - \Phi(z),
\end{align}
respectively
\begin{align*}
	\mathbb{E}[f_z^\prime(F) - F f_z(F)] = \Pp(F \leq z) - \Phi(z).
\end{align*}
The solution to this equation is given by
\begin{align*}
	f_z(w) = \begin{cases}
		\frac{\Phi(w) (1-\Phi(z))}{p(w)} & w \leq z, \\
		\frac{\Phi(z) (1-\Phi(w))}{p(w)} & \, w > z,
	\end{cases}
\end{align*}
where $p(w) = e^{-w^2/2} / \sqrt{2 \pi}$ is the density of $\mathcal{N}(0,1)$. Especially it follows that
\begin{equation} \label{first}
f'_z(w) = (1-\Phi(z)) \bigl(1 + \sqrt{2 \pi} w e^{w^2/2}  \Phi(w) \bigr), \,\, w \leq z.
\end{equation}
The following bounds are well known:
$$
	\abs{f_z^\prime(w)} \leq 1,  
	\,\, \abs{w  f_z(w)}  \leq 1 
$$
 for all $w \in \R$ and so in particular for the sup norm $\| \cdot \|_{\infty}$.
\bigskip
 
\subsection{Malliavin--Stein Method}
\noindent The idea of Malliavin--Stein method was developed by Nourdin and Peccati, and combines Stein's method with Malliavin calculus. In many cases it is applied to so called Rademacher-, Poisson- and Gaussian-functionals. Historically Malliavin calculus was applied first to Gaussian- \cite{NP12}, then to Poisson- \cite{PSTU10} and then to Rademacher-functionals \cite{NPR10} --- in fact these are all since for other distributions the chaos representations of the corresponding functionals do not exist. We want to summarize the setting and the operators that will appear in our results. For further details, results  and information related to the topic see \cite{Webpage}.

\subsubsection{The Gaussian case} \label{subGauss}

Let $\mathcal H$ be a real separable Hilbert space with scalar inner product denoted by $\langle \cdot, \cdot \rangle_{\mathcal H}$, and for $q \geq 1$, let ${\mathcal H}^{\otimes q}$ (resp. ${\mathcal H}^{\odot q}$) be the $q$th tensor product (resp. $q$th symmetric tensor product) of $\mathcal H$. We denote by $X= \{ X(h): h \in \mathcal H \}$ an {\it isonormal Gaussian process} over $\mathcal H$, which is a centered Gaussian family such that for every $h,g \in \mathcal H$,
$$
\E (X(h) X(g)) = \langle h,g \rangle_{\mathcal H}.
$$
Let $L^2(X)= L^2(\Omega, \sigma(X),P)$ be the space of square integrable functionals of $X$. It is known that any $F \in L^2(X)$ admits the chaotic expansion
\begin{equation} \label{chaos}
F = \sum_{q=0}^{\infty} I_q(f_q),
\end{equation}
where $I_0(f_0) := \E(F)$, the series converges in $L^2$ and the symmetric kernels $f_q \in \mathcal H^{\odot q}, q \geq 1$, are uniquely determined by $F$.
For every $f \in \mathcal H^{\odot q}$, the random variable $I_q(f)$ coincides with the {\it multiple Wiener-It\^o integral} of order $q$ of $f$ with respect to $X$.
For details see \cite{N2006}. Remark that $I_q(f_q)$  has finite moments of all order. Let us denote by $J_q$ the orthogonal projection operator on the $q$th Wiener chaos associated with $X$, so that, if $F \in L^2(X)$
as in \eqref{chaos}, then $J_q F =I_q(f_q)$ for every $q \geq 0$. 

Let $\mathcal S$ be the set of all smooth cylindrical random variables of the form
$$
F = g(X(\phi_1), \ldots, X(\phi_n)),
$$
where $n \geq 1$, $g: \R^n \to \R$ is a smooth function with compact support and $\phi_i \in \mathcal H$, $i=1, \ldots, n$. The {\it Malliavin derivative}
of $F$ with respect to $X$ is the element of $L^2(X)$ defined as
$$
DF = \sum_{i=1}^n \frac{\partial g}{\partial x_i} (X(\phi_1), \ldots, X(\phi_n)) \, \phi_i.
$$
By iteration one can define the $m$th derivative $D^mF$ for every $m \geq 2$. As usual, for $m \geq 1$, $\D^{m,2}$ denotes the closure of $\mathcal S$
with respect to the norm $\| \cdot \|_{m,2}$, defined by the relation
$$
\| F \|_{m,2} = \E (F^2) + \sum_{i=1}^m \E \bigl( \| D^i F \|_{{\mathcal H}^{\otimes i}} \bigr).
$$
The operator $L$, acting on square integrable random variables of the type \eqref{chaos}, is defined through the projection operators $(J_q)_{q \geq 0}$ as
$L = \sum_{q=0}^{\infty} - q \, J_q$, and is called the infinitesimal generator of the {\it Ornstein-Uhlenbeck semigroup}. The domain $\text{Dom}L$ is the space
$\D^{2,2}$. A random variable $F$ as in \eqref{chaos} is in $\D^{1,2}$ (resp. $\D^{2,2}$) if, and only if,
$$
\sum_{q=1}^{\infty} q \| f_q \|_{{\mathcal H}^{\odot q}}^2 < \infty \quad \biggl( \text{resp.} \,\, \sum_{q=1}^{\infty} q^2 \| f_q \|_{{\mathcal H}^{\odot q}}^2 < \infty \biggr),
$$
and also $\E \bigl( \| DF \|_{\mathcal H}^2 \bigr) = \sum_{q=1}^{\infty} q \| f_q \|_{{\mathcal H}^{\odot q}}^2$. We also define the operator $L^{-1}$,
which is the {\it pseudo-inverse} of $L$, as follows: for every $F \in L^2(X)$ with zero mean, we set $L^{-1} F = \sum_{q \geq 1} - \frac 1q J_q(F)$. Note that $L^{-1}$
is an operator with values in $\D^{2,2}$. Now the famous result in \cite{NP09} reads as following:

\begin{theorem}[Gaussian approximation via the Malliavin-Stein method, \cite{NP09}] \label{T1}
Let $Z \sim {\mathcal N}(0,1)$ and let $F \in \D^{1,2}$ be such that $\E(F)=0$ and $\E(F^2)=1$. Then the following bounds are in order:
$$
d_W(F,Z) \leq \E \bigl( (1- \langle DF, - DL^{-1} F \rangle_{\mathcal H} )^2 \bigr)^{1/2},
$$
where $d_W(F;Z)$ denotes the Wasserstein-distance between $F$ and $Z$. If, in addition, the law of $F$ is absolutely continuous with respect to the Lebesgue
measure, one has that
$$
\sup_{x \in \R} | P(F \leq x) - P(Z \leq x)| \leq \E \bigl( (1- \langle DF, - DL^{-1} F \rangle_{\mathcal H} )^2 \bigr)^{1/2}.
$$
If in addition $F \in \D^{1,4}$, then  $\langle DF, - DL^{-1} F \rangle_{\mathcal H} $ is square-integrable and 
$$ \E \big| 1- \langle DF, - DL^{-1} F \rangle_{\mathcal H}  \big| \leq \bigl( \Var \bigl(  1- \langle DF, - DL^{-1} F \rangle_{\mathcal H} \bigr) \bigr)^{1/2}.
$$
\end{theorem}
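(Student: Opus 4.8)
The plan is to combine Stein's method with the Malliavin integration-by-parts formula, following the now-classical argument of Nourdin and Peccati. For any of the three distances I would write it as a supremum over a suitable class of test functions $h$ and, for each fixed $h$, pass to the associated Stein equation $f'(w) - w f(w) = h(w) - \E[h(Z)]$ whose solution $f = f_h$ satisfies the sup-norm bounds recorded just before the theorem. Testing this equation at $w = F$ and taking expectations gives $\E[f'(F) - F f(F)] = \E[h(F)] - \E[h(Z)]$, so the whole problem reduces to understanding the term $\E[F f(F)]$ through the Malliavin operators.

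The heart of the argument is the identity $L L^{-1} F = F$, valid because $\E(F) = 0$, together with $L = -\delta D$, where $\delta$ is the divergence operator dual to $D$. These give $F = \delta(-D L^{-1} F)$, and hence, by the duality relation $\E[\bracket{DG, u}_{\mathcal H}] = \E[G\,\delta(u)]$ applied with $G = f(F)$ and $u = -D L^{-1}F$, together with the chain rule $D(f(F)) = f'(F)\,DF$,
\[
\E[F f(F)] = \E\big[\bracket{D(f(F)),\, -D L^{-1} F}_{\mathcal H}\big] = \E\big[f'(F)\,\bracket{DF,\, -D L^{-1} F}_{\mathcal H}\big].
\]
Substituting back yields the clean representation $\E[f'(F) - F f(F)] = \E\big[f'(F)\,(1 - \bracket{DF, -D L^{-1}F}_{\mathcal H})\big]$. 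Pulling out $\norm{f'}_\infty \le 1$ (exactly the bound $\abs{f_z'(w)} \le 1$ stated above in the Kolmogorov case, and the corresponding bound $\norm{f_h'}_\infty \le 1$ valid for $1$-Lipschitz $h$ in the Wasserstein case) and then applying the Cauchy--Schwarz inequality produces the stated estimate $\E\big[(1 - \bracket{DF, -DL^{-1}F}_{\mathcal H})^2\big]^{1/2}$.

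For the third assertion I would first record the mean-value identity $\E[\bracket{DF, -DL^{-1}F}_{\mathcal H}] = \E(F^2) = 1$, obtained from the same duality relation with $G = F$ and $\delta(-DL^{-1}F) = F$; consequently $1 - \bracket{DF, -DL^{-1}F}_{\mathcal H}$ is centered, and the inequality $\E\abs{1 - \bracket{DF, -DL^{-1}F}_{\mathcal H}} \le \big(\Var(1 - \bracket{DF, -DL^{-1}F}_{\mathcal H})\big)^{1/2}$ is then just Jensen's inequality (equivalently, Cauchy--Schwarz against the constant $1$). Square-integrability of $\bracket{DF, -DL^{-1}F}_{\mathcal H}$ under the hypothesis $F \in \D^{1,4}$ follows from Cauchy--Schwarz in $\mathcal H$ followed by Cauchy--Schwarz in $\Omega$, using that $D L^{-1} F$ inherits $L^4$-integrability from $DF$ through the hypercontractivity of the Ornstein--Uhlenbeck semigroup (Meyer's inequalities).

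The main obstacle I anticipate is purely technical: justifying the integration-by-parts and chain-rule steps when $f_h$ is not smooth. In the Kolmogorov case the solution $f_z$ has a corner at $z$ and is merely Lipschitz, so the identity $D(f_z(F)) = f_z'(F)\,DF$ must be validated by a smoothing/approximation argument; this is precisely where the extra hypothesis that the law of $F$ is absolutely continuous enters, since it guarantees $\Pp(F = z) = 0$ and hence that $f_z'(F)$ is well defined almost surely. Verifying that $f(F) \in \D^{1,2}$ and that $-DL^{-1}F$ lies in the domain of $\delta$, so that the duality formula genuinely applies, is the remaining point requiring care; once these are in place the estimates above are immediate.
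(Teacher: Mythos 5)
Your proposal is correct and is essentially the same argument as the paper's source: this theorem is quoted from Nourdin--Peccati \cite{NP09}, and the key identity you derive via $F=\delta(-DL^{-1}F)$, the duality relation and the chain rule, namely $\E\bigl[f_z'(F)-Ff_z(F)\bigr]=\E\bigl[f_z'(F)\,(1-\langle DF,-DL^{-1}F\rangle_{\mathcal H})\bigr]$, is exactly the step the paper recalls from \cite[Proposition~5.1.1]{NP09} in its proof of Theorem~\ref{Gaussianmain}. Your remaining steps (the sup-norm bounds on $f'$, Cauchy--Schwarz, the centering of $\langle DF,-DL^{-1}F\rangle_{\mathcal H}$ giving the variance bound, and Meyer-type estimates for the $\D^{1,4}$ claim) likewise match the standard proof.
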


\noindent
The famous fourth-moment theorem is saying, that if $F=I_q(f)$ with $\E(F^2)=1$, then 
$$
\sup_{x \in \R} | P(F \leq x) - P(Z \leq x)| \leq \sqrt{ \frac{q-1}{3q} (\E(F^4)-3)},
$$
see \cite{NP12}, Theorem 5.2.6.
\bigskip

\noindent
Our result is:

\begin{theorem}[non-uniform Berry-Esseen bound on Wiener space] \label{Gaussianmain}
Let $Z \sim {\mathcal N}(0,1)$ and let $F \in \D^{1,2}$ be such that $\E(F)=0$ and $\E(F^2)=1$. 
If $\E(F^{2k}) < C$ for fixed $k \in \N$, and if in addition, the law of $F$ is absolutely continuous with respect to the Lebesgue
measure, one has that for $x \in \R$
$$
| P(F \leq x) - P(Z \leq x)| \leq \frac{C}{(1 + |x|)^k} \,\, \E \bigl( (1- \langle DF, - DL^{-1} F \rangle_{\mathcal H} )^2 \bigr)^{1/2}.
$$
Moreover, let $q \geq 2$ and let $F = I_q(f)$ with $\E(F^2)=1$, we obtain for $x \in \R$
$$
| P(I_q(f) \leq x) - P(Z \leq x)| \leq \frac{C}{(1 + |x|)^3} \,\, \sqrt{ \frac{q-1}{3q} (\E(F^4)-3)},
$$
because $\E (I_q(f)^6) < C$ is fulfilled.
\end{theorem}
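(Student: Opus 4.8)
The plan is to combine the Malliavin integration-by-parts identity underlying Theorem~\ref{T1} with sharpened pointwise control of the Stein solution $f_x$, letting the moment hypothesis act only on the size of $f_x'(F)$. Since $\E(F)=0$, the integration-by-parts formula applied to the bounded Lipschitz solution $f_x$ of the Stein equation \eqref{Steinequation}---exactly as in the proof of Theorem~\ref{T1}---gives
\[
P(F \leq x) - \Phi(x) = \E\bigl[ f_x'(F)\bigl(1 - \langle DF, -DL^{-1}F\rangle_{\mathcal H}\bigr)\bigr].
\]
We may assume the right-hand side of the asserted inequality is finite, so that $\langle DF, -DL^{-1}F\rangle_{\mathcal H}$ is square-integrable; the Cauchy--Schwarz inequality then separates the two factors,
\[
\bigl| P(F \leq x) - \Phi(x)\bigr| \leq \E\bigl[ f_x'(F)^2\bigr]^{1/2}\,\E\bigl[(1 - \langle DF, -DL^{-1}F\rangle_{\mathcal H})^2\bigr]^{1/2}.
\]
The entire problem is thereby reduced to the estimate $\E[f_x'(F)^2]^{1/2} \leq C(1+|x|)^{-k}$, into which both the explicit form \eqref{first} and the moment bound $\E(F^{2k})<C$ will feed.

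First I would dispose of the easy ranges. Replacing $F$ by $-F$ leaves $\langle DF, -DL^{-1}F\rangle_{\mathcal H}$ and every even moment unchanged and, by absolute continuity, sends $P(F \leq x)-\Phi(x)$ to $-(P(-F \leq -x)-\Phi(-x))$; hence it suffices to treat $x>0$. For $0 \leq x \leq 1$ the factor $(1+|x|)^{-k}$ is bounded below by $2^{-k}$, so the claim already follows from the uniform bound of Theorem~\ref{T1}, and I may assume $x \geq 1$. For such $x$ I split the expectation over $\{F \leq x/2\}$ and $\{F > x/2\}$. On $\{F > x/2\}$ I use only $|f_x'| \leq 1$ together with Chebyshev's inequality, $P(F > x/2) \leq 4^k\,\E(F^{2k})/x^{2k} \leq C\,(1+x)^{-2k}$, which is the single place where the hypothesis $\E(F^{2k})<C$ is consumed. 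On $\{F \leq x/2\}$ I invoke the explicit form \eqref{first}, written as $f_x'(w) = (1-\Phi(x))\bigl(1 + w\Phi(w)/p(w)\bigr)$ for $w \leq x$; bounding $w\Phi(w)/p(w) \leq w\sqrt{2\pi}\,e^{w^2/2}$ and using the Mills-type estimate $1-\Phi(x) \leq p(x)/x$ gives, for $w \leq x/2$, the bound $0 \leq f_x'(w) \leq C e^{-3x^2/8}$, which is $o((1+x)^{-2k})$. Adding the two contributions yields $\E[f_x'(F)^2] \leq C(1+x)^{-2k}$, and taking square roots finishes the first statement.

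For the second statement I apply the first part with $k=3$. Because $F=I_q(f)$ lies in a fixed Wiener chaos, hypercontractivity bounds all of its moments by constants depending only on $q$; in particular $\E(F^6)<C$, so the hypothesis $\E(F^{2k})<C$ holds with $k=3$. It remains to identify the Malliavin factor: for $F=I_q(f)$ one has $-DL^{-1}F = q^{-1}DF$, whence $\langle DF, -DL^{-1}F\rangle_{\mathcal H} = q^{-1}\|DF\|_{\mathcal H}^2$, and the chaos computation behind the fourth-moment theorem (see \cite{NP12}, Theorem~5.2.6) gives $\E\bigl[(1 - q^{-1}\|DF\|_{\mathcal H}^2)^2\bigr] \leq \tfrac{q-1}{3q}\bigl(\E(F^4)-3\bigr)$. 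Substituting this into the bound from the first part produces the stated inequality.

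The step I expect to be the main obstacle is the pointwise control of $f_x'$ on $\{F \leq x/2\}$. One must check that the expression \eqref{first}, although it climbs to $\approx 1$ as $w \to x^-$, is genuinely exponentially small once $w$ stays a fixed fraction below $x$, and that the crude threshold $x/2$ is compatible with extracting precisely the power $(1+|x|)^{-k}$ rather than a weaker or stronger exponent. The remaining delicacy is purely bookkeeping: ensuring all constants are uniform in $x \geq 1$ and that the exponential contribution is dominated by the polynomial tail term. No idea beyond \eqref{first}, the Mills ratio, and Chebyshev's inequality should be needed.
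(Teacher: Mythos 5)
Your proof is correct and follows essentially the same route as the paper's: the Nourdin--Peccati identity, Cauchy--Schwarz, the Chen--Shao splitting of $f_x'$ at the level $F = x/2$ with Mills' ratio and a Chebyshev/Markov tail bound, symmetry to reduce to $x>0$, and hypercontractivity plus the fourth-moment variance estimate for the chaos case. The only cosmetic differences are that you apply Cauchy--Schwarz before splitting the expectation (exactly as the paper itself does in its Poisson proof) and that you handle small $|x|$ via the uniform bound of Theorem \ref{T1} instead of the paper's minimum trick in Lemma \ref{lemma}.
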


\begin{remark}
The proof of Theorem \ref{Gaussianmain} will show that the prefactor $ \frac{C}{(1 + |x|)^k}$ can even be improved to
$$
 C e^{-z^2/4} + \sqrt{ P(F > z/2)}, \,\, z >0.
 $$
 Whenever there is a concentration inequality of exponential type available, this bound is even of exponential type. For any sequence of Wiener-It\^o integrals
 $I_q(f_n)$, these type of result is available, see Proposition 3 and Theorem 5 in \cite{STx}. It exists constants $c_n=c_n(q,f_n)$ such that
 \begin{equation} \label{conGauss}
 \Pp \bigl( | I_q(f_n) | \geq z \bigr) \leq 2 \exp \biggl( - \frac 14 \min \bigg\{ \frac{z^2}{2^{q/2}}, ( c_n(q,f_n) z )^{2/q} \bigg\} \biggr).
 \end{equation}
 \end{remark}
 The definition of $c_n$ will be given in the examples in Section \ref{Examples}. It is given as a formula in $q$ as well as in terms of  the $L^2$-norms of the contraction operators of $f_n$. 
\bigskip

\subsubsection{The Poisson case} \label{subPoisson} 
Let $(X,\mathcal{X})$ be a standard Borel space with a $\sigma$-finite measure $\mu$, which
is defined on an underlying probability space $(\Omega, \mathcal F, \Pp)$. By $\eta$ we denote a \textit{Poisson (random) measure}, also known as \textit{Poisson point process}, on $X$ with \textit{control} $\mu$.  Define $\mathcal{X}_0 = \{B \in \mathcal{X} : \mu(B) < \infty \}$ such that $\eta = \{\eta(B) : B \in \mathcal{X}_0 \}$ is a collection of random variables with the following properties:
\begin{itemize}
	\item  $\eta(B)$ is Poisson distributed with parameter $\mu(B)$ for each $B \in \mathcal{X}_0$.
	\item If $B_1,...,B_n \in \mathcal{X}_0$ are disjoint sets, the random variables $\eta(B_1),...,\eta(B_n)$ are independent, for all $n \in \N$.
\end{itemize}
Denote by $\Pp_\eta$ the distribution of $\eta$ and, if needed, by $\hat{\eta}$ the \textit{centered Poisson measure}
\begin{align*}
	\hat{\eta}(B) = \eta(B) - \mathbb{E}[\eta(B)] = \eta(B) - \mu(B).
\end{align*}
Last, we use the notations $L^2(\mu^n)$ and $L^2(\Pp_\eta)$ for the space of square-integrable functions with respect to $\mu^n$ respectively the space of square-integrable functionals with respect to $\Pp_\eta$.
\\ We are interested in functionals $F = F(\eta) \in L^2(\Pp_\eta)$ and it is a crucial feature of $\eta$ that any such $F$ can be written as
\begin{align*}
	F = \mathbb{E}[F] + \sum_{n=1}^\infty I_n(f_n),
\end{align*}
where $I_n$ is the \textit{$n$-fold Wiener-It$\hat{o}$ integral}, also known as \textit{Poisson multiple integral}, with respect to $\hat{\eta}$ and $(f_n)_{n \in \N}$ is a unique sequence of symmetric functions in $L^2(\mu^n)$, depending on $F$. We refer to section 3 in \cite{LP11} for formal details.
For such functionals we define the \textit{difference operator} $D_xF$ of $F$ at $x \in X$ by putting
\begin{align*}
	D_xF(\eta) & :=  F(\eta + \delta_x) - F(\eta), \\
	DF & : x \mapsto D_xF,
\end{align*}
also known as the \textit{add-one-cost operator} since it measures the effect on F of adding the point $x \in X$ to $\eta$. 
We obtain
\begin{align*}
	F \in {\hat{\D}}^{1,2} := Dom(D) = \left \{F \in L^2(\Pp_\eta) \bigg \vert \, \sum_{n=1}^\infty n \cdot n! \norm{f_n}_n^2 < \infty \right \},
\end{align*}
with $\norm{.}_n$  the norm in $L^2(\mu^n)$.
Supposing $F \in {\hat{\D}}^{1,2}$  such that $\sum_{n=1}^\infty n^2 \cdot n! \norm{f_n}_n^2 < \infty$, we define the \textit{Ornstein--Uhlenbeck operator} $L$ and the \textit{pseudo-inverse Ornstein--Uhlenbeck operator} $L^{-1}$ as $LF := \sum_{n=1}^\infty -n I_n(f_n)$ and $L^{-1}F  := \sum_{n=1}^\infty - \frac{1}{n}I_n(f_n)$,
where in the definition of $L^{-1} F$ we assume $F \in L^2(\Pp_\eta)$ with $\E(F)=0$. Finally, if $z \mapsto h(z)$ is a random function on $\mathcal X$
with chaos expansion $h(z) = z + \sum_{n=1}^{\infty} I_n(h_n(z, \cdot))$ with symmetric functions $h_n(z, \cdot) \in L^2(\mu^n)$ such that
$\sum_{n=0}^{\infty} (n+1)! \, \|h_n\|^2 < \infty$
(let us write $h \in \text{Dom}(\delta)$ if this is satisfied), the {\it Skorohod integral} $\delta(h)$ of $h$ is defined as
$$
\delta(h) = \sum_{n=0}^{\infty} I_{n+1}(\tilde{h}_n),
$$
where $\tilde{h}_n$ is the canonical symmetrization of $h_n$ as a function of $n+1$ variables. Sometimes $\delta$ is called the \textit{divergence operator}.
Interesting enough there are nice relationships between the operators $D$, $\delta$ and $L$, see \cite[Lemma~2.1]{ET14}. It holds that $F \in \text{Dom}(L)$ if and only if $F \in \text{Dom}(D)$ and $DF \in \text{Dom}(\delta)$, in which case $\delta(DF) = -LF$. The integration by parts formula is the key of the Malliavin-Stein method
and reads $\E(F \delta(h)) = \E \langle DF, h \rangle$ for every $F \in \text{Dom}(D)$ and $h \in \text{Dom}(\delta)$.

The Gaussian-approximation for $L^2$-functionals of Poisson processes via Malliavin-Stein started in \cite{PSTU10} and \cite{ET14}.
A recent result from \cite{LRPY20} seem to obtain a nearly optimal bound:

\begin{theorem}[Berry--Esseen bound for Poisson functionals, \cite{LRPY20}]\label{T2}
Let $F \in {\hat{\D}}^{1,2}$ with $\mathbb{E}[F] = 0, \Var(F) = 1$.  
Further
\begin{align*}
	\mathbb{E} \int_X \int_X \left[D_y \left(D_xF \abs{D_x L^{-1} F}\right)\right]^2 \mu^2(dx,dy) < \infty,
\end{align*}
\begin{align*}
F f_z(F) \in {\hat{\D}}^{1,2} \quad \forall z \in \R.
\end{align*}
Then
	\begin{align*}
\sup_{z \in \R} \abs{\Pp(F \leq z) - \Phi(z)} \leq \mathbb{E} | 1 - \langle DF , - DL^{-1} F \rangle | +  2 \mathbb{E} \bigl[ \abs{ \delta (  DF  | D L^{-1} F |  )  } \bigr].
	\end{align*}
\end{theorem}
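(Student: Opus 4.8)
The plan is to run the Malliavin--Stein scheme driven by the Kolmogorov-distance Stein equation \eqref{Steinequation}. Fix $z \in \R$ and take expectations in \eqref{Steinequation}, which reduces everything to the single quantity $\E[F f_z(F)]$, since
\begin{align*}
\Pp(F \leq z) - \Phi(z) = \E[f_z'(F)] - \E[F f_z(F)].
\end{align*}
Because $\E[F]=0$, the pseudo-inverse gives $F = L L^{-1}F = -\delta(D L^{-1}F)$, and the integration-by-parts formula $\E(G\,\delta(h)) = \E\langle DG, h\rangle$ with $G=f_z(F)$ and $h = DL^{-1}F$ yields the key identity $\E[F f_z(F)] = \E\langle D f_z(F), -DL^{-1}F\rangle$. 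The two standing hypotheses---$F f_z(F)\in\hat{\D}^{1,2}$ for every $z$ and finiteness of the double integral of $[D_y(D_xF\abs{D_xL^{-1}F})]^2$---are precisely what guarantees that $f_z(F)\in\mathrm{Dom}(D)$, that $DL^{-1}F\in\mathrm{Dom}(\delta)$, and that every expectation below converges, so that this identity is legitimate.

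The decisive difference from the Gaussian case is that the add-one-cost operator is a genuine difference and obeys no chain rule: $D_x f_z(F) = f_z(F + D_x F) - f_z(F)$. I would therefore write $D_x f_z(F) = f_z'(F)\,D_x F + R_x(z)$ with remainder $R_x(z) = f_z(F+D_xF) - f_z(F) - f_z'(F)\,D_x F$, insert this into the inner product, and reorganise to obtain
\begin{align*}
\Pp(F\le z)-\Phi(z) = \E\big[f_z'(F)\,(1 - \langle DF, -DL^{-1}F\rangle)\big] - \E\!\int_X R_x(z)\,(-D_x L^{-1}F)\,\mu(dx).
\end{align*}
The first term is controlled at once by the uniform bound $\abs{f_z'}\le 1$ recorded just after \eqref{first}, producing the contribution $\E\abs{1 - \langle DF, -DL^{-1}F\rangle}$, which is the first summand of the asserted bound.

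The main obstacle is the remainder, and specifically the task of producing the \emph{Skorohod integral} $\delta(DF\,\abs{DL^{-1}F})$ rather than the cruder $L^1$-quantity $\E\int_X (D_xF)^2\abs{D_xL^{-1}F}\,\mu(dx)$. Using that $f_z$ is absolutely continuous (a consequence of $\abs{f_z'}\le1$), I would represent $R_x(z) = \big(\int_0^1 [f_z'(F+sD_xF)-f_z'(F)]\,ds\big)D_xF$, so that two applications of $\abs{f_z'}\le 1$ bound the bracketed increment of the Stein solution by $2$; this accounts for the constant. The delicate point is to keep the cancellations of the divergence rather than discarding them: instead of passing to absolute values inside the integral, I would split $-D_xL^{-1}F = \operatorname{sign}(-D_xL^{-1}F)\,\abs{D_xL^{-1}F}$ and re-express the remainder, through a further application of the duality formula $\E(G\,\delta(h))=\E\langle DG,h\rangle$ together with the Mecke equation for $\eta$, as an expectation paired against $\delta(DF\,\abs{DL^{-1}F})$ with a factor uniformly bounded by $2$. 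Matching the difference-operator structure of $R_x(z)$ to the divergence of $DF\,\abs{DL^{-1}F}$ is where the analytic work concentrates, and it is what lets the estimate close as $2\,\E\abs{\delta(DF\,\abs{DL^{-1}F})}$. Combining the two contributions and taking the supremum over $z\in\R$ then yields the claim.
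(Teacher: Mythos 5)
Your overall skeleton --- the Stein equation, the integration-by-parts identity $\E[Ff_z(F)]=\E\langle Df_z(F),-DL^{-1}F\rangle$, the isolation of the term $\E\bigl[f_z'(F)(1-\langle DF,-DL^{-1}F\rangle)\bigr]$, and a chain-rule remainder $R_x(z)$ --- is exactly the route of \cite{LRPY20} that the paper invokes (see the decomposition into $J_1$ and $J_2$ in the proof of Theorem \ref{Poissonmain}). But the step you yourself describe as ``where the analytic work concentrates'' is the whole content of the theorem, and you do not carry it out; worse, your preparation for it destroys the structure that makes it possible. Once you bound the increment $\int_0^1\bigl(f_z'(F+sD_xF)-f_z'(F)\bigr)ds$ by $2$, you only know $|R_x(z)|\le 2|D_xF|$, and from that information alone no sign-splitting of $-D_xL^{-1}F$, duality formula or Mecke equation can produce $2\,\E\bigl|\delta(DF\,|DL^{-1}F|)\bigr|$: the duality $\E(G\,\delta(h))=\E\langle DG,h\rangle$ can only be applied after the remainder has been recognized as (dominated by) the difference $D_xG$ of a \emph{specific} random variable $G\in\operatorname{Dom}(D)$, paired against $h=DF\,|DL^{-1}F|$, and nothing in your sketch identifies such a $G$. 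The constant $2$ in the theorem does not come from bounding an increment of $f_z'$ by $2$.

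The missing idea is a two-sided pointwise estimate for $R_x(z)$ based on the monotonicity of the two maps $w\mapsto wf_z(w)$ and $w\mapsto \1{w>z}$. Writing $R_x(z)=\int_F^{F+D_xF}\bigl(f_z'(t)-f_z'(F)\bigr)dt$ and substituting the Stein equation $f_z'(t)=tf_z(t)+\1{t\le z}-\Phi(z)$, monotonicity of both maps yields the sandwich
$$
-\,D_xF\,D_x\1{F>z}\;\le\;R_x(z)\;\le\;D_xF\,D_x\bigl(Ff_z(F)\bigr),
$$
in which $D_xF\,D_x\1{F>z}\ge 0$ and $D_xF\,D_x\bigl(Ff_z(F)\bigr)\ge 0$, hence
$$
|R_x(z)|\;\le\;D_xF\,D_x\bigl(Ff_z(F)+\1{F>z}\bigr).
$$
Only now does duality enter, with $G=Ff_z(F)+\1{F>z}$ --- this is precisely why the hypothesis $Ff_z(F)\in\hat{\D}^{1,2}$ appears in the statement (it is not there to make $f_z(F)\in\operatorname{Dom}(D)$, just as the double-integral hypothesis is there to put $DF\,|DL^{-1}F|$, not $DL^{-1}F$, into $\operatorname{Dom}(\delta)$):
$$
\E\int_X D_x\bigl(Ff_z(F)+\1{F>z}\bigr)\,D_xF\,|D_xL^{-1}F|\,\mu(dx)
=\E\Bigl[\bigl(Ff_z(F)+\1{F>z}\bigr)\,\delta\bigl(DF\,|DL^{-1}F|\bigr)\Bigr],
$$
which is the paper's $J_2$; the factor $2$ is obtained only at this last stage, from $|wf_z(w)|\le 1$ and $0\le\1{F>z}\le 1$. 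Without the monotonicity sandwich and the identification of $G$, your argument cannot close, so as written the proposal has a genuine gap at its central step.
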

\bigskip

\noindent 
Our result is:

\begin{theorem}[Non-uniform Berry-Esseen bound for Poisson functionals] \label{Poissonmain}
Let $F \in {\hat{\D}}^{1,2}$ with $\mathbb{E}[F] = 0, \Var(F) = 1$ and $\mathbb{E}[F^{2k}] < \infty$ for fixed $k \in \N$. 
Further
\begin{align*}
	\mathbb{E} \int_X \int_X \left[D_y \left(D_xF \abs{D_x L^{-1} F}\right)\right]^2 \mu^2(dx,dy) < \infty,
\end{align*}
\begin{align*}
F f_z(F) \in {\hat{\D}}^{1,2} \quad \forall z \in \R.
\end{align*}
Then, for any $z \in \R$,
	\begin{align*}
\abs{\Pp(F \leq z) - \Phi(z)} \leq \frac{C}{(1 + \abs{z})^k} \prth{\prth{\mathbb{E} \prth{ 1 - \langle DF , - DL^{-1} F \rangle }^2 }^{1/2} + \prth{\mathbb{E} \prth{  \delta\prth{  DF \abs{D L^{-1} F} } }^2 }^{1/2} },
	\end{align*}
and $C$ is a constant depending on $k \in \N$.
\end{theorem}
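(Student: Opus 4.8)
The plan is to re-run the proof of the uniform bound in Theorem \ref{T2}, keeping explicit track of how every factor originating from the Stein solution $f_z$ depends on $z$, and then to trade this dependence against the moment hypothesis $\E[F^{2k}]<\infty$ to produce the prefactor $(1+\abs{z})^{-k}$. It suffices to treat $z\ge z_0$ for a large fixed $z_0$: for $\abs{z}\le z_0$ the assertion follows directly from Theorem \ref{T2} together with the estimates $\E\abs{1-\langle DF,-DL^{-1}F\rangle}\le(\E(1-\langle DF,-DL^{-1}F\rangle)^2)^{1/2}$ and $\E\abs{\delta(DF\abs{DL^{-1}F})}\le(\E(\delta(DF\abs{DL^{-1}F}))^2)^{1/2}$, since $(1+z_0)^{-k}$ is bounded below; the range $z\le -z_0$ is symmetric, replacing the decay region $\{w\le z/2\}$ below by $\{w\ge z/2\}$ and using $\Phi(z)\le e^{-z^2/2}$ in $f_z(w)=\Phi(z)(1-\Phi(w))/p(w)$. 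For $z\ge z_0$ I start from \eqref{Steinequation}, $\Pp(F\le z)-\Phi(z)=\E[f_z'(F)-Ff_z(F)]$; since $\E[F]=0$ we have $F=\delta(-DL^{-1}F)$, so the integration by parts formula and the hypothesis $Ff_z(F)\in\hat{\D}^{1,2}$ give $\E[Ff_z(F)]=\E\langle Df_z(F),-DL^{-1}F\rangle$. The exact product rule $D_xf_z(F)=f_z(F+D_xF)-f_z(F)=f_z'(F)D_xF+r_x$ then splits the quantity of interest into a first-order term $\E[f_z'(F)(1-\langle DF,-DL^{-1}F\rangle)]$ and a remainder assembled from $r_x$, the latter being precisely what Theorem \ref{T2} controls through $\delta(DF\abs{DL^{-1}F})$.

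The core analytic input is a sharpening of the standard bounds $\abs{f_z'}\le1$ and $\abs{wf_z(w)}\le1$ on the region where the argument stays well below $z$. Using $f_z(w)=\Phi(w)(1-\Phi(z))/p(w)$ for $w\le z$ and the explicit derivative \eqref{first}, I would prove that there is a constant $C$ with $\abs{f_z(w)}+\abs{f_z'(w)}\le Ce^{-z^2/4}$ for all $w\le z/2$: on this range the ratio $\Phi(w)/p(w)$, and the analogous factor $1+\sqrt{2\pi}we^{w^2/2}\Phi(w)$ in \eqref{first}, grow at most like $e^{z^2/8}$ (up to a polynomial in $z$), which is crushed by the prefactor $1-\Phi(z)\le e^{-z^2/2}$. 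I then split $\Omega=\{F\le z/2\}\cup\{F>z/2\}$. On $\{F\le z/2\}$ the factor $f_z'(F)$ has size $Ce^{-z^2/4}$, so the first-order term contributes at most $Ce^{-z^2/4}\,\E\abs{1-\langle DF,-DL^{-1}F\rangle}$; on $\{F>z/2\}$ I use $\abs{f_z'}\le1$ and Cauchy--Schwarz to bound the contribution by $\Pp(F>z/2)^{1/2}(\E(1-\langle DF,-DL^{-1}F\rangle)^2)^{1/2}$, and Markov's inequality with $\E[F^{2k}]<\infty$ yields $\Pp(F>z/2)^{1/2}\le C(1+z)^{-k}$. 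Since also $e^{-z^2/4}\le C(1+z)^{-k}$, the whole first-order term is at most $C(1+z)^{-k}(\E(1-\langle DF,-DL^{-1}F\rangle)^2)^{1/2}$, the first term of the claim.

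For the remainder I would follow the integration-by-parts step of \cite{LRPY20} that rewrites it as $\E[G_z\,\delta(DF\abs{DL^{-1}F})]$, where $G_z$ is a function of $F$ built from $f_z$ and $f_z'$ with $\norm{G_z}_\infty\le2$ (this is the source of the factor $2$ in Theorem \ref{T2}). Because $G_z$ is assembled from the same Stein quantities, the refined estimates of the previous paragraph give $\abs{G_z(F)}\1{F\le z/2}\le Ce^{-z^2/4}$. Running the identical two-region decomposition --- exponential smallness of $G_z$ on $\{F\le z/2\}$, and Cauchy--Schwarz against $\Pp(F>z/2)^{1/2}\le C(1+z)^{-k}$ on the complement, together with $\E\abs{\delta(DF\abs{DL^{-1}F})}\le(\E(\delta(DF\abs{DL^{-1}F}))^2)^{1/2}$ --- bounds the remainder by $C(1+z)^{-k}(\E(\delta(DF\abs{DL^{-1}F}))^2)^{1/2}$. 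Summing the two contributions gives the stated inequality with $C=C(k)$.

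The main difficulty lies in the remainder rather than in the first-order term. One must check that the multiplier $G_z$ furnished by the Poisson integration by parts really is a bounded function of $F$ that inherits the $e^{-z^2/4}$ decay on $\{F\le z/2\}$. The delicate point is the jump of $f_z'$ at $z$, which enters $r_x$ through indicator differences $\1{F+D_xF\le z}-\1{F\le z}$: on $\{F\le z/2\}$ such a difference is nonzero only when the add-one-cost $D_xF$ is large enough to carry $F$ across the gap from $z/2$ to $z$. I expect the hypothesis $\E\int_X\int_X[D_y(D_xF\abs{D_xL^{-1}F})]^2\,\mu^2(dx,dy)<\infty$ to be exactly what absorbs these large-jump configurations into the $L^2$-norm of the Skorohod integral, and the hypothesis $Ff_z(F)\in\hat{\D}^{1,2}$ to legitimize localizing $\delta(DF\abs{DL^{-1}F})$ against the indicator $\1{F>z/2}$ before applying Cauchy--Schwarz. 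Getting these two integrability points to interact cleanly with the pointwise decay of $f_z$ is where the real work sits.
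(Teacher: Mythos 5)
For $z \ge z_0$ your argument is essentially the paper's own proof: both start from the decomposition $\abs{\Pp(F\le z)-\Phi(z)} \le J_1 + J_2$ extracted from the proof of Theorem \ref{T2}, with $J_1 = \E\abs{f_z'(F)\prth{1-\langle DF,-DL^{-1}F\rangle}}$ and $J_2$ carrying the multiplier $G_z(F) = Ff_z(F)+\Unens{F>z}$; both refine the Chen--Shao estimate to get $\abs{f_z'(F)}+\abs{Ff_z(F)} \le Ce^{-z^2/4}$ on $\ensemble{F\le z/2}$; and both combine Markov's inequality $\Pp(F>z/2)\le \E[F^{2k}]\,(2/z)^{2k}$ with Cauchy--Schwarz on the tail region. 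The organizational differences are immaterial: you split into regions before applying Cauchy--Schwarz, while the paper applies Cauchy--Schwarz once and then bounds the three Stein fragments $(\E\abs{f_z'(F)}^2)^{1/2}$, $(\E\abs{Ff_z(F)}^2)^{1/2}$, $\Pp(F>z)^{1/2}$ separately; you dispose of $\abs{z}\le z_0$ by citing Theorem \ref{T2}, the paper by its Lemma \ref{lemma}. Also, the difficulty you locate in your last paragraph is not where the work sits: since you (like the paper) take the $J_1+J_2$ bound from \cite{LRPY20} as given, the jump of $f_z'$ and the remainder $r_x$ are already packaged into $G_z$; the two integrability hypotheses serve only to put $DF\abs{DL^{-1}F}$ into $\text{Dom}(\delta)$ and to justify the integration by parts, not to absorb large jumps or to localize $\delta$ against $\Unens{F>z/2}$.

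The genuine gap is the case $z\le -z_0$, which you dismiss as ``symmetric''. For the first-order term the symmetry does work: on $\ensemble{F\ge z/2}$ one has, from $f_z'(w)=\prth{\sqrt{2\pi}\,we^{w^2/2}(1-\Phi(w))-1}\Phi(z)$ for $w>z$ and $\Phi(z)\le e^{-z^2/2}/(\sqrt{2\pi}\abs{z})$, the bound $\abs{f_z'(F)}\le Ce^{-z^2/4}$, and $\Pp(F<z/2)$ is controlled by Markov. But for the remainder it fails: the multiplier is $G_z(F)=Ff_z(F)+\Unens{F>z}$, and for negative $z$ the indicator $\Unens{F>z}$ equals $1$ on the entire decay region $\ensemble{F\ge z/2}\subset\ensemble{F>z}$; indeed Stein's identity gives $G_z(F) = f_z'(F)+\Phi(z)+1$ there, which is of order $1$, not $e^{-z^2/4}$. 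Running your two-region argument for $z\to-\infty$ therefore yields only a bound of order $\prth{\E\prth{\delta\prth{DF\abs{DL^{-1}F}}}^2}^{1/2}$ with no decay in $z$, and the prefactor $(1+\abs{z})^{-k}$ is lost. The missing idea is to recentre the multiplier before estimating: a Skorohod integral has zero mean, $\E\bigl[\delta\prth{DF\abs{DL^{-1}F}}\bigr]=0$, so one may subtract the constant $1$ and replace $\Unens{F>z}$ by $-\Unens{F\le z}$ --- this is exactly the paper's step ``$1=\Unens{F>z}+\Unens{F\le z}$'' in its treatment of $z\le 0$. The recentred multiplier $Ff_z(F)-\Unens{F\le z}$ equals $f_z'(F)+\Phi(z)$ on $\ensemble{F\ge z/2}$, hence is exponentially small there, while $\E\bigl[\Unens{F\le z}\abs{\delta\prth{DF\abs{DL^{-1}F}}}\bigr]\le \Pp(F\le z)^{1/2}\prth{\E\prth{\delta\prth{DF\abs{DL^{-1}F}}}^2}^{1/2}$ is handled by Markov. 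With this single correction your proof closes.
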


\begin{remark}
The proof of Theorem \ref{Poissonmain} will show that the prefactor $ \frac{C}{(1 + |x|)^k}$ can even be improved to
$$
 C e^{-z^2/4} + \sqrt{ P(F > z/2)}, \,\, z>0.
 $$
 Whenever there is a concentration inequality of exponential type available, this bound is even of exponential type. For any fixed Wiener-It\^o integral
 $I_q(f)$, these type of result is available, see Theorems 3.1 in \cite{ST23}. Under certain restrictions to all cumulants (measured by a sequence
 $(\alpha_n)$ in \cite[(3.1)]{ST23}) of order
 $m \geq 3$ simultaneously, there exists a sequence $c_n=c_n(q, f, \alpha_n)$ such that
 \begin{equation} \label{conPoisson}
 \Pp \bigl( | I_q(f) | \geq z \bigr) \leq 2 \exp \biggl( - \frac 14 \min \bigg\{ \frac{z^2}{2^{q}}, ( c_n(q, f, \alpha_n) z )^{1/q} \bigg\} \biggr).
 \end{equation}
 The definition of $c_n$ will be given in the examples in Section \ref{Examples}. The sequence $(\alpha_n)_n$ is a certain bound to all cumulants of order
 $m \geq 3$ of $F$ with respect to Poisson point processes $(\eta_n)_n$, see \cite[Condition (3.1)]{ST23}.
 For so called Poisson $U$-statistics, a similar concentration inequality was proved in \cite[Theorem 3.2]{ST23}.
 \end{remark}
\bigskip	

\subsubsection{The Rademacher case} \label{subRademacher}
\noindent We start with $X = (X_k)_{k \in \N}$, a sequence of Rademacher random variables, e.g. $\forall k \in \N$:
\begin{align*}
\Pp(X_k = 1) & = p_k \in (0,1) \\
\Pp(X_k = -1) & = q_k = 1 - p_k,
\end{align*}
and, if needed, the standardized random variable
\begin{align*}
Y_k = \frac{X_k - p_k + q_k}{2 \sqrt{q_k p_k}}.
\end{align*}
We are interested in random variables $F \in L^2(\Omega, \sigma(X), \Pp)$. We consider the Hilbert space  $\mathcal H$ of real square-summable sequences.
For $p \in \N$, we denote by $\mathcal H^{\otimes p}$ ($\mathcal H^{\odot p}$ respectively) the $p$th tensor product (symmetric tensor product resp.) of $\mathcal H$.
Let $\Delta_p := \{ (i_1, \ldots, i_p) \in \N^p: i_j \not= i_k \, \text{for} \, j \not= k \}$. Now consider $\mathcal H_0^{\odot p} = \{ f \in \mathcal H^{\odot p}: f = \tilde{f} 1_{\Delta_p} \}$ for $p \in \N$, where $\tilde{f}$ stands for the canonical symmetrization of $f$.

For any $F \in  L^2(\Omega, \sigma(X), \Pp)$, there is a unique sequence of kernels $f_n \in \mathcal H_0^{\odot n}$, $n \in \N$, such that
\begin{equation} \label{chaos3}
F = \E(F) + \sum_{n=1}^{\infty} J_n(f_n) \quad \text{in} \,\, L^(\Omega).
\end{equation}
For this and all other statements see the survey \cite{Pr08}. Here $J_n(f_n)$ is the $n$th {\it discrete multiple integral} of $f_n$ and is defined by
$$
J_n(f_n) := \sum_{(i_1, \ldots, i_n) \in \N^n} f_n(i_1, \ldots, i_n) Y_{i_1} \cdots Y_{i_n} \, 1_{\Delta_n}(i_1, \ldots, i_n).
$$

Writing $F = f(X) = f(X_1,X_2,...)$ we define the {\it discrete gradient} $D_kF$ of $F$ at $k$th coordinate:
\begin{align*}
D_kF & := \sqrt{p_k q_k} (F_k^+ - F_k^-), \\
DF & := (D_1F, D_2F,...),
\end{align*}
where $F_k^+ := f(X_1,...,X_{k-1},+1,X_{k+1},...)$ and $F_k^- := f(X_1,...,X_{k-1},-1,X_{k+1},...), \, k \in \N$.
We will consider
\begin{align*}
F \in \D^{1,2} := \{F \in L^2(\Omega, \sigma(X), \Pp) \vert \, \mathbb{E}[\norm{DF}_{l^2(\N)}^2] < \infty \},
\end{align*}
where $\mathbb{E}[\norm{DF}_{l^2(\N)}^2] := \mathbb{E}\bigl[ \sum_{k \in \N} (D_k F)^2 \bigr]$.
The {\it adjoint operator} $\delta$ of $D$ is characterized by the duality relation
\begin{align*}
	\mathbb{E}[\langle DF, u \rangle] = \mathbb{E}[F \delta(u)], \quad F \in \D^{1,2}, u \in \text{Dom}(\delta).
\end{align*} 
Here the domain $\text{Dom}(\delta)$ consists of square-integrable $\mathcal H$-valued random variables $u \in L^2(\Omega, \mathcal H)$ satisfying
the property, that there is some constant $C_u >0$ such that $| \E \langle DF, u \rangle | \leq C_u \sqrt{\E(F^2)}$ for all $F \in \D^{1,2}$.

Supposing $F$ has the representation \eqref{chaos3} we say $F \in \text{Dom}(L)$ if $\sum_{n \in \N} n^2 n! \|f_n\|^2 < \infty$. In this case we define  
\begin{align*}
LF  := \sum_{n=1}^\infty -n J_n(f_n),
\,\, L^{-1}F := \sum_{n=1}^\infty - \frac{1}{n}J_n(f_n),
\end{align*}
the {\it Ornstein--Uhlenbeck operator} $L$ and the {\it pseudo-inverse Ornstein--Uhlenbeck operator} $L^{-1}$. You can show that $F \in Dom(L)$ is equivalent to $F \in \D^{1,2}$ and $DF \in Dom(\delta)$, in this case, it is $L = -\delta D$.

A first normal approximation bound for symmetric Rademacher functionals has been obtained in \cite{NPR10}. A bound in the Kolmogorov distance has later been found
in \cite{KRT16} and was extended to the non-symmetric setting in \cite{KRT17} and \cite{DK19}. In \cite{ERTZ22}, 
a significantly simplified bound was presented:

\begin{theorem}[Berry--Esseen bound for Rademacher functionals, \cite{ERTZ22}]\label{T3}
	Let $F \in \D^{1,2}$ with $\mathbb{E}[F] = 0, \Var(F) = 1$ and 
	\begin{align*}
		F f_z(F) + \Unens{F>z} \in \D^{1,2} \quad \forall z \in \R.
	\end{align*}
	Assume in addition that 
	\begin{align*}
		\frac{1}{\sqrt{pq}} DF \abs{D L^{-1} F}  \in Dom(\delta).
	\end{align*}
	Then one has 
	\begin{align*}
\sup_{z \in \R} \abs{\Pp(F \leq z) - \Phi(z)} \leq \mathbb{E} \big[ \big| 1 - \langle DF , - DL^{-1} F \rangle \big| \big] + 
2 \|  \delta \bigl( \frac{1}{\sqrt{pq}} DF \abs{D L^{-1} F} \bigr) \|_{L^1(\Omega)}.
	\end{align*}
\end{theorem}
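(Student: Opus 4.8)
The plan is to execute the Malliavin--Stein scheme with the Kolmogorov Stein kernel $f_z$ from \eqref{Steinequation}, using the duality relation $\E[\langle DF,u\rangle]=\E[F\delta(u)]$ and the identity $L=-\delta D$. Evaluating the Stein equation at $F$ gives
$$\Pp(F\le z)-\Phi(z)=\E\bigl[f_z'(F)-Ff_z(F)\bigr].$$
Since $\abs{f_z'}\le 1$ the function $f_z$ is $1$-Lipschitz, and because $\abs{D_kf_z(F)}=\sqrt{p_kq_k}\,\abs{f_z(F_k^+)-f_z(F_k^-)}\le\abs{D_kF}$ we have $f_z(F)\in\D^{1,2}$ as soon as $F\in\D^{1,2}$; this legitimises the integration by parts. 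Writing $F=\delta(-DL^{-1}F)$ (valid because $\E[F]=0$ and $LL^{-1}F=F$) and applying duality yields
$$\E[Ff_z(F)]=\E\bigl[\langle Df_z(F),-DL^{-1}F\rangle\bigr].$$

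First I would add and subtract the \emph{naive chain rule} term $f_z'(F)\langle DF,-DL^{-1}F\rangle$, so that
$$\Pp(F\le z)-\Phi(z)=\E\bigl[f_z'(F)\bigl(1-\langle DF,-DL^{-1}F\rangle\bigr)\bigr]+R_z,\qquad R_z:=\E\bigl[\langle f_z'(F)DF-Df_z(F),-DL^{-1}F\rangle\bigr].$$
Because $\norm{f_z'}_\infty\le 1$, the first summand is at most $\E\abs{1-\langle DF,-DL^{-1}F\rangle}$, which is precisely the first term of the claimed bound and is independent of $z$. Everything then reduces to controlling the chain-rule defect $R_z$ by $2\norm{\delta(\tfrac1{\sqrt{pq}}DF\abs{DL^{-1}F})}_{L^1(\Omega)}$.

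To estimate $R_z$ I would look at its $k$-th coordinate,
$$\sqrt{p_kq_k}\,(-D_kL^{-1}F)\bigl[f_z'(F)(F_k^+-F_k^-)-\bigl(f_z(F_k^+)-f_z(F_k^-)\bigr)\bigr],$$
and expand the bracket through $f_z(F_k^+)-f_z(F_k^-)=\int_{F_k^-}^{F_k^+}f_z'(s)\,ds$ followed by the substitution $f_z'(s)=sf_z(s)+\Unens{s\le z}-\Phi(z)$ coming from the Stein equation. This splits the defect into a smooth part, governed by the modulus of continuity of $w\mapsto wf_z(w)$ together with the global bounds $\abs{wf_z(w)}\le 1$ and $\norm{f_z}_\infty\le\sqrt{2\pi}/4$, and a jump part carried by $\Unens{s\le z}-\Unens{F\le z}$, which is active only on the event that the discrete step from $F_k^-$ to $F_k^+$ crosses the level $z$. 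Here the hypothesis $Ff_z(F)+\Unens{F>z}\in\D^{1,2}$ is exactly what allows the non-Lipschitz pieces $wf_z(w)$ and $\Unens{\,\cdot\,>z}$ to be treated inside the discrete calculus. Each contribution produces one factor $\abs{F_k^+-F_k^-}=\tfrac1{\sqrt{p_kq_k}}\abs{D_kF}$ and one factor $\abs{D_kL^{-1}F}$, and the crucial point is to reassemble them not as the crude inner product $\E\langle\abs{DF},\abs{DL^{-1}F}\rangle$ but as a single divergence: the goal is to exhibit a function $\phi_z$ with $\norm{\phi_z}_\infty\le 2$ for which $R_z=\E\bigl[\phi_z(F)\,\delta(\tfrac1{\sqrt{pq}}DF\abs{DL^{-1}F})\bigr]$, the assumption $\tfrac1{\sqrt{pq}}DF\abs{DL^{-1}F}\in\mathrm{Dom}(\delta)$ guaranteeing that this divergence is defined. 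Whence $\abs{\phi_z}\le 2$ yields $\abs{R_z}\le 2\,\E\abs{\delta(\tfrac1{\sqrt{pq}}DF\abs{DL^{-1}F})}$, and since neither term on the right depends on $z$, taking the supremum over $z\in\R$ completes the argument.

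The step I expect to be the main obstacle is precisely this last reassembly. The difficulty is that $f_z$ is merely Lipschitz with a derivative that jumps at $z$, so the usual second-order Taylor control is unavailable; moreover, in the discrete setting the base point $F$ always equals one of the two endpoints $F_k^+$ or $F_k^-$, so the defect cannot be symmetrised away. One must therefore show simultaneously that the jump of $f_z'$ at the level $z$ contributes a term of genuine product type $\abs{F_k^+-F_k^-}\,\abs{D_kL^{-1}F}$ (this is where $\Unens{F>z}\in\D^{1,2}$ is indispensable) and that the whole remainder can be repackaged as one divergence with the sharp constant $2$, rather than being dominated by the weaker, non-divergence quantity $\E\langle\abs{DF},\abs{DL^{-1}F}\rangle$.
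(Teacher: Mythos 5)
Your skeleton is the same as the one underlying the paper's treatment (which quotes the proof of \cite[Theorem~3.1]{ERTZ22}): Stein equation, $F=\delta(-DL^{-1}F)$ plus duality, adding and subtracting $f_z'(F)\langle DF,-DL^{-1}F\rangle$, and bounding the first term by $\E\abs{1-\langle DF,-DL^{-1}F\rangle}$ via $\norm{f_z'}_\infty\le 1$. All of that is correct but is also the routine part. The theorem's entire content is the step you explicitly leave open: converting the chain-rule defect $R_z$ into the divergence term. In the paper's decomposition this defect is controlled by
\begin{align*}
J_2 = \E\Bigl[\bigl(F f_z(F) + \Unens{F>z}\bigr)\,\delta\bigl(\tfrac{1}{\sqrt{pq}}\, DF\,\abs{DL^{-1}F}\bigr)\Bigr],
\end{align*}
and then trivially $J_2\le 2\,\norm{\delta(\tfrac{1}{\sqrt{pq}}DF\abs{DL^{-1}F})}_{L^1(\Omega)}$. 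You correctly guess the target (a multiplier of sup-norm at most $2$ times the divergence) and correctly sense that the hypothesis $Ff_z(F)+\Unens{F>z}\in\D^{1,2}$ is what makes it possible, but you do not supply the mechanism, and you flag it yourself as ``the main obstacle.'' That is a genuine gap, not a detail.

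The missing idea is not a clever rewriting of $R_z$ itself as $\E[\phi_z(F)\delta(u)]$ (an exact identity of that form is more than one needs and is not what is proved); it is a two-step majorization. First, one uses the monotonicity of $w\mapsto wf_z(w)+\Unens{w>z}$ to bound the coordinate-wise chain-rule defect pointwise:
\begin{align*}
\abs{f_z(F_k^+)-f_z(F_k^-)-f_z'(F)(F_k^+-F_k^-)} \;\le\; \tfrac{1}{\sqrt{p_kq_k}}\abs{D_kF}\cdot\tfrac{1}{\sqrt{p_kq_k}} D_k\bigl(Ff_z(F)+\Unens{F>z}\bigr),
\end{align*}
the right-hand factor being nonnegative precisely because of that monotonicity. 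Summing against $\sqrt{p_kq_k}\,\abs{D_kL^{-1}F}$ gives $\abs{R_z}\le \E\bigl[\bigl\langle D\bigl(Ff_z(F)+\Unens{F>z}\bigr),\,\tfrac{1}{\sqrt{pq}}DF\abs{DL^{-1}F}\bigr\rangle\bigr]$. Second, one applies the duality relation \emph{in reverse}, with $G=Ff_z(F)+\Unens{F>z}\in\D^{1,2}$ and $u=\tfrac{1}{\sqrt{pq}}DF\abs{DL^{-1}F}\in\mathrm{Dom}(\delta)$, to get $\E[\langle DG,u\rangle]=\E[G\,\delta(u)]\le 2\,\norm{\delta(u)}_{L^1(\Omega)}$. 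Without the monotonicity lemma and this reverse integration by parts, your ``reassembly into a single divergence'' cannot be carried out, and the bound degenerates to the crude inner-product estimate you rightly identify as insufficient.
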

\bigskip

\noindent
Our result is:
\begin{theorem}[Non-uniform Berry--Esseen bound for Rademacher functionals]\label{Theorem:NU_Rademacher}
	Let $F \in \D^{1,2}$ with $\mathbb{E}[F] = 0, \Var(F) = 1$ and $\mathbb{E}[F^{2k}] < C$ for fixed $k \in \N$. Further
	\begin{align*}
		F f_z(F) + \Unens{F>z} \in \D^{1,2} \quad \forall z \in \R,
	\end{align*}
	\begin{align*}
		\frac{1}{\sqrt{pq}} DF \abs{D L^{-1} F}  \in Dom(\delta).
	\end{align*}
	Then, for any $z \in \R$,
	\begin{align*}
\abs{\Pp(F \leq z) - \Phi(z)} \leq \frac{C}{(1 + \abs{z})^k} \Bigg (& \prth{\mathbb{E} \prth{ 1 - \langle DF , - DL^{-1} F \rangle }^2 }^{1/2} \\
& + \prth{\mathbb{E} \prth{  \delta\prth{ \frac{1}{\sqrt{pq}} DF \abs{D L^{-1} F} } }^2 }^{1/2} \Bigg ),
	\end{align*}
and $C$ is a constant depending on $k \in \N$.
\end{theorem}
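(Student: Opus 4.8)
The plan is to revisit the derivation of the uniform bound in Theorem~\ref{T3} from \cite{ERTZ22}, but instead of discarding every Stein-solution factor by estimating it through its sup-norm, to carry these factors along and control them by the non-uniform Stein estimates of Chen and Shao, combining them with a two-region decomposition in the spirit of \cite{LLWC21}. Throughout I would treat $z>0$; the case $z \leq 0$ follows by applying the statement to $-F$ and $-z$, and for $\abs{z}$ bounded the asserted prefactor is comparable to a constant, so nothing is lost there.

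First I would start from the Stein equation \eqref{Steinequation}, writing $\Pp(F \leq z) - \Phi(z) = \E[f_z'(F) - F f_z(F)]$, and rewrite the second expectation by the Malliavin integration-by-parts machinery. Using $\delta D = -L$ and $L L^{-1}F = F$ to write $F = \delta(-D L^{-1}F)$, the duality $\E[G\,\delta(u)] = \E\langle DG, u\rangle$ with $G = f_z(F)$ gives $\E[F f_z(F)] = \E\langle D f_z(F), -DL^{-1}F\rangle$. Because the discrete gradient obeys no exact chain rule, $D_k f_z(F) = \sqrt{p_k q_k}\prth{f_z(F_k^+) - f_z(F_k^-)}$ must be expanded by a discrete Taylor formula: its first-order part produces $f_z'(F)\,\langle DF, -DL^{-1}F\rangle$ and hence, together with $\E[f_z'(F)]$, the term $\E\bigl[f_z'(F)\prth{1 - \langle DF, -DL^{-1}F\rangle}\bigr]$, while the second-order remainder --- carrying the unit jump of $f_z'$ at $z$ that is responsible for the hypothesis $F f_z(F) + \1{F>z} \in \D^{1,2}$ --- reorganizes into a term governed by $\delta\prth{\tfrac{1}{\sqrt{pq}} DF \abs{DL^{-1}F}}$, exactly as in \cite{ERTZ22}. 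The essential point is to perform these manipulations \emph{without} bounding the accompanying Stein-solution factors, so that each of the two resulting terms appears multiplied by a factor that is only bounded by $\norm{f_z'}_\infty \leq 1$ (and analogously for $f_z$).

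The key new ingredient is a non-uniform bound on the Stein solution generalizing \cite[Lemma~5.1]{CS01}: for $z>0$ one has $\sup_{w \leq z/2}\abs{f_z(w)} + \sup_{w \leq z/2}\abs{f_z'(w)} \leq C e^{-z^2/4}$. This follows from \eqref{first} together with the companion expression for $f_z$, since on $\{w \leq z\}$ both carry the prefactor $1 - \Phi(z) \lesssim e^{-z^2/2}$, whereas the remaining factor $\sqrt{2\pi}\,w\,e^{w^2/2}\Phi(w)$ is, for $w \leq z/2$, at most of order $e^{z^2/8}$, leaving a surplus exponential decay. With this in hand I would split $\Omega = \{F \leq z/2\} \cup \{F > z/2\}$. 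On $\{F \leq z/2\}$ the surviving Stein factors are at most $C e^{-z^2/4}$, so each term is bounded by $C e^{-z^2/4}$ times the $L^1$-norm, hence times the $L^2$-norm, of the corresponding structural integrand; on $\{F > z/2\}$ the Stein factors are bounded by $1$ and Cauchy--Schwarz extracts $\sqrt{\Pp(F > z/2)}$ multiplied by the $L^2$-norm of that integrand (this Cauchy--Schwarz step is precisely why the $L^1$-norms of Theorem~\ref{T3} are replaced by $L^2$-norms here). Adding the two regions produces the improved prefactor $C e^{-z^2/4} + \sqrt{\Pp(F > z/2)}$ announced in the remark, paired with $\prth{\E\prth{1 - \langle DF, -DL^{-1}F\rangle}^2}^{1/2}$ and $\prth{\E\,\delta\prth{\tfrac{1}{\sqrt{pq}} DF \abs{DL^{-1}F}}^2}^{1/2}$. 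Finally, Markov's inequality with $\E[F^{2k}] < C$ gives $\sqrt{\Pp(F > z/2)} \leq C(1+\abs{z})^{-k}$, and $e^{-z^2/4}$ is absorbed into the same prefactor.

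The main obstacle I anticipate lies in the interplay between the discrete Taylor expansion and the region decomposition. Because the gradient evaluates the Stein solution at the perturbed values $F_k^\pm$ rather than at $F$ itself, the event $\{F \leq z/2\}$ does not automatically keep the argument of $f_z$ (or of $f_z'$) below $z/2$: flipping a single coordinate can move $F$ by $\abs{D_k F}/\sqrt{p_k q_k}$. Reconciling this --- either by showing that the non-uniform Stein bound survives the single-coordinate perturbation, or by reorganizing the remainder so that every surviving Stein factor is effectively evaluated at $F$ --- is the delicate step, and it is exactly where the structure of \cite{ERTZ22} (which already absorbs the perturbation into the $\delta$-term and the indicator $\1{F>z}$) must be married to the Chen--Shao estimates. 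Once this bookkeeping is settled, the remaining computations are routine.
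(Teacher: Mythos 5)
Your strategy for $z>0$ is essentially the paper's own: the paper cites the proof of \cite[Theorem~3.1]{ERTZ22} to obtain exactly the decomposition $|\E[f_z'(F)-Ff_z(F)]|\leq J_1+J_2$ with $J_1=\E\bigl|f_z'(F)\bigl(1-\langle DF,-DL^{-1}F\rangle\bigr)\bigr|$ and $J_2=\E\bigl[(Ff_z(F)+\1{F>z})\,\delta\bigl(\tfrac{1}{\sqrt{pq}}DF|DL^{-1}F|\bigr)\bigr]$, and then (following its Poisson proof) applies Cauchy--Schwarz and bounds the Stein fragments $(\E|f_z'(F)|^2)^{1/2}$, $(\E|Ff_z(F)|^2)^{1/2}$, $(\Pp(F>z))^{1/2}$ by $C(1+|z|)^{-k}$ via the Chen--Shao region decomposition, Markov's inequality and Lemma~\ref{lemma}; whether one splits into regions before or after Cauchy--Schwarz is immaterial. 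Note also that your anticipated ``main obstacle'' is a non-issue: in the bound of \cite[Theorem~3.1]{ERTZ22} every occurrence of $f_z$ and $f_z'$ is already evaluated at $F$ itself --- the failure of the chain rule and the single-coordinate perturbations $F_k^{\pm}$ have been absorbed, once and for all, into the factor $\delta\bigl(\tfrac{1}{\sqrt{pq}}DF|DL^{-1}F|\bigr)$ and the indicator $\1{F>z}$ (this is precisely why the hypothesis $Ff_z(F)+\1{F>z}\in\D^{1,2}$ is assumed). So no further ``marrying'' of the two techniques is required; one simply cites that decomposition, as the paper does.

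The genuine weak point is your treatment of $z\leq 0$ by reflection. In the Gaussian case the paper does argue by symmetry, but there $F$ is assumed absolutely continuous and the hypotheses are manifestly invariant under $F\mapsto -F$. For Rademacher functionals $F$ is discrete, so first $\Pp(F\leq z)=1-\Pp(-F<-z)$ involves a \emph{strict} inequality and atoms matter (repairable by one-sided limits), and second --- more seriously --- the hypothesis $Ff_z(F)+\1{F>z}\in\D^{1,2}$ for all $z$ does not obviously transfer to $-F$: using $f_z(-w)=f_{-z}(w)$ one finds $(-F)f_z(-F)+\1{-F>z}=-\bigl(Ff_{-z}(F)+\1{F>-z}\bigr)+1-\1{F=-z}$, so the transfer requires $\1{F=-z}\in\D^{1,2}$, which is not guaranteed at atoms of $F$. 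One could patch this with limiting arguments along non-atom levels, but as written this step is a gap. The paper avoids it entirely: since $\E[\delta(u)]=0$, the indicator $\1{F>z}$ in $J_2$ may be exchanged (up to sign) for $\1{F\leq z}$, whose probability is small for $z\leq 0$ by Markov's inequality, and the estimates on $\E|f_z'(F)|^2$ and $\E|Ff_z(F)|^2$ are re-derived for $z\leq 0$ using $f_z'(x)=\bigl(\sqrt{2\pi}\,x\,e^{x^2/2}(1-\Phi(x))-1\bigr)\Phi(z)$ for $x>z$ together with $\Phi(z)\leq e^{-z^2/2}/(\sqrt{2\pi}|z|)$. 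You should adopt this direct route (or supply the missing invariance/limit arguments) to close the case $z\leq 0$; for $z>0$ your proof is sound and coincides with the paper's.
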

\bigskip

\section{Proofs of non-uniform Berry-Esseen bounds}

We prove the three non-uniform Berry-Esseen bounds. The proof will focus on bounding $\E | f'_z(F)|$, where $f_z$ is the solution of the Stein-equation
\eqref{Steinequation}. One of the main steps is adapting and refining Lemma 5.1 in \cite{CS01}. We will prepare a little technical detail before.

\begin{lemma} \label{lemma}
Let $k \in \N$ and $z \in \R$ be such that $| z^l | \leq C$ for any $1 \leq l \leq 2k$. For any constant $C_1 >0$ there exists a constant $C_2 >0$ such that
\begin{align}\label{minimum}
	\min\prth{1, \frac{C_1}{\abs{z}^{2k}}} & \leq \frac{C_2}{(1+\abs{z})^{2k}}.
\end{align}

\end{lemma}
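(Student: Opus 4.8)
The plan is to prove the inequality \eqref{minimum} for \emph{every} $z \in \R$ by an elementary case distinction, producing a constant $C_2$ that depends only on $k$ and $C_1$; the boundedness hypothesis on the powers $z^l$ then only serves to make this uniformity explicit and is not otherwise needed. The two facts I would exploit are the trivial pointwise bounds $\min(1, C_1/|z|^{2k}) \le 1$ and $\min(1, C_1/|z|^{2k}) \le C_1/|z|^{2k}$, together with the observation that $1 + |z|$ is comparable to $\max(1,|z|)$. Accordingly I split the real line at $|z| = 1$, since this is precisely where which of the two competing bounds for $1+|z|$ (namely $1+|z| \le 2$ versus $1+|z| \le 2|z|$) is the efficient one.

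First, on the region $|z| \le 1$, I would bound the left-hand side of \eqref{minimum} simply by $1$, and use $(1+|z|)^{2k} \le 2^{2k}$. The desired inequality $1 \le C_2/(1+|z|)^{2k}$ then holds as soon as $C_2 \ge 2^{2k}$. Next, on the region $|z| > 1$, I would instead use $\min(1, C_1/|z|^{2k}) \le C_1/|z|^{2k}$ and the bound $(1+|z|)^{2k} \le (2|z|)^{2k} = 2^{2k}|z|^{2k}$. After cancelling the common factor $|z|^{-2k}$, the claim reduces to $C_1 (1 + 1/|z|)^{2k} \le C_2$, and since $|z| > 1$ gives $(1+1/|z|)^{2k} < 2^{2k}$, this holds whenever $C_2 \ge 2^{2k} C_1$.

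Combining the two regimes, the single choice $C_2 = 2^{2k}\max(1, C_1)$ validates \eqref{minimum} for all $z \in \R$, with $C_2$ depending only on $k$ and $C_1$ as claimed. There is no real obstacle here: the lemma is a purely arithmetic comparison, and the only point requiring a little care is ensuring that $C_2$ is chosen uniformly in $z$ --- which the split at $|z|=1$ handles automatically. A marginally sharper constant could be obtained by splitting instead at the true threshold $|z| = C_1^{1/(2k)}$ where the minimum changes branches, yielding $C_2 = (1 + C_1^{1/(2k)})^{2k}$, but for the applications the crude constant above is entirely sufficient.
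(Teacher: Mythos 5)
Your proof is correct, and it takes a mildly but genuinely different route from the paper's. The paper splits exactly at the branch point of the minimum, i.e.\ according to whether $\abs{z}^{2k} \leq C_1$ or $\abs{z}^{2k} > C_1$: in the first case it reduces \eqref{minimum} to $(1+\abs{z})^{2k} \leq C_2$ and justifies this by the standing hypothesis $\abs{z^l} \leq C$; in the second case it expands $C_1 (1+\abs{z})^{2k}/\abs{z}^{2k}$ by the binomial theorem and bounds each of the $2k+1$ terms separately using $\abs{z} > C_1^{1/(2k)}$. You instead split at $\abs{z} = 1$ and compress the large-$\abs{z}$ case into the single estimate $1+\abs{z} \leq 2\abs{z}$, arriving at the explicit constant $C_2 = 2^{2k}\max(1,C_1)$. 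Your route buys two things. First, it avoids the binomial bookkeeping and yields a clean closed-form constant. Second, and more substantively, it makes explicit that the hypothesis $\abs{z^l} \leq C$ for $1 \leq l \leq 2k$ is superfluous: \eqref{minimum} holds for \emph{every} $z \in \R$ with $C_2$ depending only on $k$ and $C_1$. This uniformity in $z$ is precisely what is needed when the lemma is invoked in the proofs of Theorems \ref{Gaussianmain}, \ref{Poissonmain} and \ref{Theorem:NU_Rademacher}, where $z$ ranges over an unbounded set; the paper's proof in fact contains the same uniformity implicitly, since in its first case the required bound $(1+\abs{z})^{2k} \leq \bigl(1+C_1^{1/(2k)}\bigr)^{2k}$ already follows from the case condition $\abs{z}^{2k} \leq C_1$ without any extra assumption --- which is exactly the observation in your closing remark, where your sharper constant $\bigl(1+C_1^{1/(2k)}\bigr)^{2k}$ is what the paper's choice of split point would produce.
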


\begin{proof}
If $\abs{z}^{2k} \leq C_1$, the minimum is 1 and \eqref{minimum} is equivalent to $(1 + \abs{z})^{2k} \leq C_2$,
which is true by our assumption. 
If $\abs{z}^{2k} > C_1$, the minimum is $\frac{C_1}{\abs{z}^{2k}}$ and \eqref{minimum} is equivalent to
\begin{align*}
	C_1 \frac{(1+\abs{z})^{2k}}{\abs{z}^{2k}} \leq C_2,
\end{align*}
and
\begin{align*}
	C_1 \frac{(1+\abs{z})^{2k}}{\abs{z}^{2k}} &  = C_1 \sum_{l=0}^{2k} \binom{2k}{l}\abs{z}^{l-2k} \\
	                                                                                         &  \leq C_1 \prth{\frac{1}{\abs{z}^{2k}}+ \cdots + C_{2k,s}\frac{1}{\abs{z}^{s}}+ \cdots + 1} \\
	                                                                                         &  < C_1 \prth{\frac{1}{C_1} + \cdots + C_{2k,s}\frac{1}{C_1^{s/2k}}+ \cdots + 1} \\
	                                                                                         & =  1 + \cdots + C_{2k,s} C_1^{1 - s/2k}+ \cdots + C_1 =: C_2.
	                                                                                        \end{align*}
Here $C_{2k,s}$ is a constant to bound the corresponding binomial coefficient.
\end{proof}

\begin{proof}[Proof of Theorem \ref{Gaussianmain}]
In the following all objects and notions we are dealing with are presented in subsection \ref{subGauss}.
Let us recall the major step in the proof of \cite[Proposition 5.1.1]{NP09}. In the context of Theorem \ref{Gaussianmain} we have for any $z \in \R$ that
$$
\Pp(F \leq z) - \Phi(z) = \E \bigl( f_z(F) - F f_z(F) \bigr) = \E \bigl( f'_z(F) ( 1 - \langle DF, - DL^{-1} F \rangle_{\mathcal H} ) \bigr).
$$
Remember that $f'_z(w) = (1-\Phi(z)) \bigl(1 + \sqrt{2 \pi} w e^{w^2/2}  \Phi(w) \bigr)$ for $w \leq z$.
As in \cite[Lemma 5.1]{CS01} we consider the decomposition
$$
f'_z(F) =  f'_z(F) 1_{\{F < 0\}} + f'_z(F) 1_{ \{0 \leq F \leq z/2\}} + f'_z(F) 1_{ \{F > z/2 \}}.
$$
Let $z >0$. With \eqref{first} we obtain $f'_z(F) 1_{\{F < 0\}} \leq 1 - \Phi(z)$ and by Mill's ratio we have $1 - \Phi(z) \leq \min \bigl( \frac 12, \frac 1z \bigr) e^{-z^2/2}$,
hence $f'_z(F) 1_{\{F < 0\}} \leq \frac 12 e^{-z^2/2}$. Moreover with \eqref{first}, on the set $\{0 \leq F \leq z/2\}$ we have $f'_z(F) \leq C  e^{-z^2/4}$ with some
constant $C >0$. Summarizing we obtain for every $z>0$ that
$$
f'_z(F) \leq C e^{-z^2/4} +  f'_z(F) 1_{ \{F > z/2 \}}.
$$
Now it follows with $\| f'_z \|_{\infty} \leq 1$ that
$$
| \Pp(F \leq z) - \Phi(z) | \leq C e^{-z^2/4}  \E | 1 - \langle DF, - DL^{-1} F \rangle_{\mathcal H} | + \E  \big( |1 - \langle DF, - DL^{-1} F \rangle_{\mathcal H} | 1_{ \{F > z/2 \}} \bigr).
$$
Under the $2k$-moment condition we have $\Pp (F > z/2) \leq \frac{\E (F^{2k})}{(z/2)^{2k}} \leq \frac{C}{z^{2k}}$.
Now we use the fact that  $|f'_z(x)| \leq 1$ for any $x \in \R$. Therefore  
it is possible to consider the minimum of our bounds and 1 for all substantial subterms. Applying Lemma \ref{lemma} finishes the proof of our Theorem for $z>0$ by applying the Cauchy-Schwarz inequality. 

Next let us consider the case $z \leq 0$. Here we obtain that $\Pp(F \leq z) - \Phi(z) = \Pp(-F \leq -z) - \Phi(-z)$ by the symmetry
of the standard normal distribution. But now we can apply our bound for $-z >0$, using $\langle D(-F), -D L^{-1}(-F) \rangle_{\mathcal H} =
\langle DF, -D L^{-1} F \rangle_{\mathcal H}$. Hence the Theorem is proved.
\end{proof}

\begin{proof}[Proof of Theorem \ref{Poissonmain}]
In the following all objects and notions we are dealing with are presented in subsection \ref{subPoisson}.
By Stein's method and the proof of \cite[Theorem~1.12 ]{LRPY20} (special case $\mathbb{E}[F]=0, \sigma^2=1$) we have for $z \in \R$
\begin{align}\label{result_ERTZ}
\abs{ \Pp(F \leq z) - \Phi(z)} & = \abs{\mathbb{E}[f_z^\prime(F) - Ff_z(F)]} \leq J_1 + J_2 
\end{align}
with
\begin{align*}
J_1 & := \mathbb{E} \abs{f_z^\prime(F) \prth{ 1 - \langle DF , - DL^{-1} F \rangle }}, \\
J_2 & := \mathbb{E}\left[(F f_z(F) + \Unens{F>z} )\delta\prth{ DF \abs{D L^{-1} F} }\right].
\end{align*}
The first term $J_1$ can be handled exactly as in the proof of Theorem \ref{Gaussianmain}. But we will change
the arguments a little. For the upcoming estimation we can split $J_2$ into two terms, namely
\begin{align*}
\abs{J_2} & \leq \mathbb{E} \left[ \abs{  \delta\prth{ DF \abs{D L^{-1} F} } } \abs{ F f_z(F) + \Unens{F>z}}\right] \leq  J_{21} + J_{22}
\end{align*}
with
\begin{align*}
		J_{21} & := \mathbb{E} \left[ \abs{  \delta\prth{ DF \abs{D L^{-1} F} }  } \abs{ F f_z(F)}\right], \\
		J_{22} & := \mathbb{E} \left[ \abs{  \delta\prth{ DF \abs{D L^{-1} F} }  } \Unens{F>z}\right].
\end{align*}
	
As a first step we apply Cauchy--Schwarz inequality on every of our terms of interest
 \begin{align*}
 J_1 & \leq (\mathbb{E} \abs{f_z^\prime(F)}^2)^{1/2} \prth{\mathbb{E} \prth{ 1 - \langle DF , - DL^{-1} F \rangle }^2 }^{1/2}, \\
   J_{21} & \leq (\mathbb{E} \abs{ F f_z(F)}^2)^{1/2} \prth{\mathbb{E} \prth{  \delta\prth{ DF \abs{D L^{-1} F} } }^2 }^{1/2}, \\
    	J_{22} & \leq (\mathbb{P}(F>z))^{1/2} \prth{\mathbb{E} \prth{  \delta\prth{ DF \abs{D L^{-1} F} } }^2 }^{1/2}.
 \end{align*}
To bound the factors where the solutions of the Stein-equation $f_z$ appear, we refer to the proof of \cite[Theorem~2]{LLWC21}, where the authors considered even
some non-normal limiting distributions. Their computations are valid for the normal case by choosing $g(x) = x$ in their framework, see \cite[Section 2]{LLWC21}. In particular, in their condition (A4) one can choose $\tau$ arbitrary, e.g. $\tau = \frac{1}{2}$. We extend our proof adaption by also referring to the proof of \cite[Theorem~2]{LLWC21}. Note, that our appearing constants will very likely depend on $k \in \N$, but we will just write $C$. The core of our proof is to show
\begin{align}
		(\mathbb{E} \abs{f_z^\prime(F)}^2)^{1/2} & \leq \frac{C}{(1 + \abs{z})^k}, \label{Stein_fragment_1}\\
		(\mathbb{E} \abs{ F f_z(F)}^2)^{1/2} & \leq \frac{C}{(1 + \abs{z})^k}, \label{Stein_fragment_2} \\
		 (\mathbb{P}(F>z))^{1/2} & \leq \frac{C}{(1 + \abs{z})^k}. \label{Stein_fragment_3}
\end{align}
\medskip

\noindent
To prove these bounds we first assume that $z > 0$:
\\ Proof of \eqref {Stein_fragment_1}:  As in the proof of the bound (12) in \cite{LLWC21} and similar to the
proof of Theorem \ref{Gaussianmain}, we adapt Lemma 5.1 from \cite{CS01} and consider the following decomposition:
\begin{align*}
	\mathbb{E} \abs{f_z^\prime(F)}^2 & = \mathbb{E} [f_z^\prime(F)^2 \Unens{F \leq 0 }] + \mathbb{E} [f_z^\prime(F)^2 \Unens{0 < F \leq z/2 }] + \mathbb{E} [f_z^\prime(F)^2 \Unens{F >  z/2 }].
\end{align*}
and bound these terms. First, as in the proof of Theorem \ref{Gaussianmain}, we obtain
	\begin{align*}
	\mathbb{E} [f_z^\prime(F)^2 \Unens{F \leq 0 }] & \leq (1-\Phi(z))^2 \leq \frac 14 e^{-z^2} \leq \frac{C}{z^{2k}},
\end{align*}
using in particular $z^{2l} e^{-z^2} \leq C \; \forall \; l \in \N$. Second, again as in the proof of Theorem \ref{Gaussianmain}, we obtain
	\begin{align*}
		\mathbb{E} [f_z^\prime(F)^2 \Unens{0 < F \leq z/2 }] & \leq  C e^{-z^2/2} \leq \frac{C}{z^{2k}}.
	\end{align*}
Last, we can bound the third term and show also \eqref {Stein_fragment_3} by using $\norm{f_z^\prime}_{\infty} \leq 1$ and the Markov inequality for
	\begin{align*}
	\mathbb{P}(F>z) & \leq \mathbb{P}(F^{2k} > z^{2k}) \leq \frac{\mathbb{E}[F^{2k}]}{z^{2k}} < \frac{C}{z^{2k}} \quad \text{for all} \,\,  \; z > 0.
           \end{align*}
Next we prove \eqref {Stein_fragment_2}:
We adapt the proof of (17) in \cite{LLWC21}. We us Stein's identity $x \, f_z(x) = f'_z(x) - (1_{\{ x \leq z\}} - \Phi(z))$ and our estimate on $
\mathbb{E} \left[\prth{f_z^\prime(F)}^2\right]$ to observe
	\begin{align*}
		\mathbb{E} [(Ff_z(F))^2] & = \mathbb{E} \left[\prth{f_z^\prime(F) -  \prth{\Unens{F \leq z }- \Phi(z)}}^2\right] \\
		& \leq C \prth{\mathbb{E} \left[\prth{f_z^\prime(F)}^2\right]  + \mathbb{E} \left[ \prth{\Unens{F \leq z }- \Phi(z)}^2\right] } \\
		& \leq C \prth{\frac{C}{z^{2k}}  + \mathbb{E} \left[ \prth{\Unens{F \leq z }- \Phi(z)}^2\right] }.
	\end{align*}
	For $z>0$ we have
	\begin{align*}
		\mathbb{E} \left[ \prth{\Unens{F \leq z }- \Phi(z)}^2\right] & = \mathbb{E} \left[ \prth{1- \Phi(z)}^2 \Unens{F \leq z } \right] + \mathbb{E} \left[\Phi(z)^2 \Unens{F > z }\right] \\ & \leq  \prth{1- \Phi(z)}^2 + \Pp(F >z) \leq \frac{C}{z^{2k}}.
	\end{align*}
Now we use the fact that $\| x f_z(x) \| \leq 1$ and $\|f'_z(x)\| \leq 1$ for any $x \in \R$. Therefore  
it is possible to consider the minimum of our bounds and 1 for all substantial subterms. Applying Lemma \ref{lemma} finishes the proof of our Theorem for $z>0$. 
\medskip

\noindent
Next we treat the case $z \leq 0$. With the simple identity $1 = \Unens{F>z} + \Unens{F \leq z}$, we receive the alternativ bound on $| \Pp(F \leq z) -\Phi(z)|$
by $J_1$ as before and $J_2$ changes to 
$$\mathbb{E}\left[(F f_z(F) + \Unens{F \leq z} )\delta\prth{ DF \abs{D L^{-1} F} }\right].$$
A direct consequence is the need to modify \eqref {Stein_fragment_3}. We obtain 
\begin{align*}
\mathbb{P}(F\leq z) & \leq \mathbb{P}(F^{2k} > z^{2k}) \leq \frac{\mathbb{E}[F^{2k}]}{z^{2k}} < \frac{C}{z^{2k}} \quad \forall \; z \leq 0.
\end{align*}
Next we will prove \eqref {Stein_fragment_1} and \eqref {Stein_fragment_2} for $z \leq 0$. As the authors explain themselves at the end of the proof of \cite[Theorem~2]{LLWC21} no big modifications of their argumentation are needed. For $z \leq 0$ we know have that
\begin{align*}
	\mathbb{E} \abs{f_z^\prime(F)}^2 & = \mathbb{E} [f_z^\prime(F)^2 \Unens{F \leq z/2 }] + \mathbb{E} [f_z^\prime(F)^2 \Unens{z/2 < F \leq 0 }] + \mathbb{E} [f_z^\prime(F)^2 \Unens{F >  0 }].
\end{align*}
Remember that for $x > z$ we obtain $f'_z(x) = \bigl( \sqrt{2 \pi} x e^{x^2/2}(1- \Phi(x)) -1 \bigr) \Phi(z)$. This time we
will apply the bound 
$$
\Phi(z) \leq \frac{e^{-z^2/2}}{\sqrt{2 \pi} |z|}, \,\, z \leq 0,
$$
to check \eqref{Stein_fragment_1}.  To prove \eqref{Stein_fragment_2}, we will estimate
$$
\E \bigl( 1_{\{ F \leq z \}} - \Phi(z) \bigr)^2 \leq 2 \Pp (F \leq z) + 2 \Phi^2(z) \leq \frac{C}{|z|^2}.
$$
Now the Theorem is proved. 
\end{proof}

\begin{proof}[Proof of Theorem~\ref{Theorem:NU_Rademacher}]
In the following all objects and notions we are dealing with are presented in subsection \ref{subRademacher}.
By Stein's method and the proof of \cite[Theorem~3.1]{ERTZ22} we have for $z \in \R$
	\begin{align}\label{result_ERTZ}
		\abs{ \Pp(F \leq z) - \Phi(z)} & = \abs{\mathbb{E}[f_z^\prime(F) - Ff_z(F)]} \leq J_1 + J_2 
	\end{align}
	with
	\begin{align*}
		J_1 & := \mathbb{E} \abs{f_z^\prime(F) \prth{ 1 - \langle DF , - DL^{-1} F \rangle }}, \\
		J_2 & := \mathbb{E}\left[(F f_z(F) + \Unens{F>z} )\delta\prth{ \frac{1}{\sqrt{pq}} DF \abs{D L^{-1} F} }\right].
	\end{align*}
Since we consider normal approximation, from here on the proof is just an adaption of the proof of Theorem~\ref{Poissonmain}.
\end{proof}
\bigskip

\section{Non-uniform bounds of Poincar\'e-type}

The bounds in Theorem \ref{T1}, \ref{T2} and \ref{T3} can be quite difficult to evaluate if on uses the representations of the inverse Ornstein-Uhlenbeck generators
in terms of the chaos expansions, because this requires the explicit knowledge of the kernel functions in these representations. The main tool
to overcome this problems is the development of Mehler's formula in combination with the second order Poincar\'e inequalities. Then the estimates
of the quality of normal approximation only involve the first and second Malliavin derivative and the first and second order difference operators, respectively. 
Our non-uniform bounds can easily be transmitted
to the second order Poincar\'e inequalities.

For an isonormal Gaussian process $X$ as in Section \ref{subGauss}, we assume $F \in \D^{2,4}$. Then the Kolmogorov distance between
$F$ and a standard normal $Z$ can be bounded by a constant times
$$
\E \big[ \|D^2 F \|_{op}^4 \bigr]^{1/4} \times \E \big[ \| DF \|_{\mathcal H}^4 \big]^{1/4},
$$
where $ \|D^2 F \|_{op}$ is the operator norm of the random Hilbert-Schmidt operator $f \mapsto \langle f, D^2F \rangle_{\mathcal H}$.
With  Theorem \ref{Gaussianmain}. we obtain a non-uniform Berry-Esseen bound with prefactor $C / (1+|x|^3)$.

In the Poisson setting of Section \ref{subPoisson}, \cite[Theorem 1.2]{LPS16} can be simplified applying \cite[Theorem 1.15]{LRPY20}.  
We introduce the notion
$$
D_{x,y}^2 F := D_y(D_x F).
$$
A direct consequence of Theorem~\ref{Poissonmain} is:

\begin{theorem}[Non-uniform Berry--Esseen bound for Poisson functionals, second version]\label{Poissonmain2}
Let $F \in {\hat{\D}}^{1,2}$ with $\mathbb{E}[F] = 0, \Var(F) = 1$ and $\mathbb{E}[F^{2k}] < \infty$ for fixed $k \in \N$. 
Further
\begin{align*}
	\mathbb{E} \int_X \int_X \left[D_y \left(D_xF \abs{D_x L^{-1} F}\right)\right]^2 \mu^2(dx,dy) < \infty,
\end{align*}
\begin{align*}
F f_z(F) \in {\hat{\D}}^{1,2} \quad \forall z \in \R.
\end{align*}
Then, for any $z \in \R$,
	\begin{align*}
\abs{\Pp(F \leq z) - \Phi(z)} \leq \frac{C}{(1 + \abs{z})^k} \biggl( 2 \sqrt{A_1} + \sqrt{A_2} + \sqrt{A_3} + \sqrt{A_4  + A_5} \biggr),
	\end{align*}
and $C$ is a constant depending on $k \in \N$, where
\begin{align*}
A_1 & := \int \big[ \E (D_{x_1} F)^2 (D_{x_2} F)^2 \bigr]^{1/2}  \big[ \E (D_{x_1,x_3}^2 F)^2 (D_{x_2,x_3}^2 F)^2 \bigr]^{1/2} \mu^3(d(x_1,x_2,x_3)),\\
A_2 & := \int  \E(D_{x_1,x_3}^2 F)^2 (D_{x_2,x_3}^2 F)^2 \mu^3(d(x_1,x_2,x_3)), \\
A_3 & := \int \E(D_x F)^4 \mu(dx), \\
A_4 & :=  6 \int \big[ \E (D_{x_1}F)^4 \bigr]^{1/2}  \big[ \E (D_{x_1,x_2}^2F)^4 \bigr]^{1/2}  \mu^2(d(x_1,x_2)), \\
A_5 & := 3 \int \E (D_{x_1,x_2}^2F)^4  \mu^2 (d(x_1,x_2)).
\end{align*}
\end{theorem}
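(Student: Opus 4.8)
The plan is to obtain the statement directly from Theorem~\ref{Poissonmain}, whose hypotheses are exactly those assumed here. Applying it gives, for every $z \in \R$,
$$
\abs{\Pp(F \leq z) - \Phi(z)} \leq \frac{C}{(1 + \abs{z})^k}\bigl( T_1 + T_2 \bigr),
$$
where $T_1 := \bigl(\E\,[1 - \langle DF, -DL^{-1}F\rangle]^2\bigr)^{1/2}$ and $T_2 := \bigl(\E\,[\delta(DF\,\abs{DL^{-1}F})]^2\bigr)^{1/2}$. Since the non-uniform prefactor is inherited verbatim, it suffices to bound $T_1 + T_2 \leq 2\sqrt{A_1} + \sqrt{A_2} + \sqrt{A_3} + \sqrt{A_4 + A_5}$ by quantities independent of $z$. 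These are precisely the second-order Poincar\'e estimates of \cite[Theorem~1.2]{LPS16} in the simplified form of \cite[Theorem~1.15]{LRPY20}, i.e. the same estimates that cast the uniform bound of Theorem~\ref{T2} in its concrete second-order shape, so the argument is one of assembly rather than of genuinely new Malliavin computation.

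First I would treat $T_1$. From $\delta(-DL^{-1}F) = F$ and the integration-by-parts formula one has $\E \langle DF, -DL^{-1}F\rangle = \E[F^2] = 1$, hence $T_1 = \Var\bigl(\langle DF, -DL^{-1}F\rangle\bigr)^{1/2}$. Representing $-DL^{-1}F$ through Mehler's formula and estimating the variance of the resulting carr\'e-du-champ by the Poincar\'e inequality and Cauchy--Schwarz produces the second-difference contractions and yields $T_1 \leq 2\sqrt{A_1} + \sqrt{A_2}$.

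Next I would treat $T_2$ via the Poisson Skorohod isometry applied to $h_x := D_xF\,\abs{D_xL^{-1}F}$,
$$
\E\,[\delta(h)]^2 = \E\int_X h_x^2\,\mu(dx) + \E\int_X\int_X D_y h_x\, D_x h_y\,\mu(dx)\,\mu(dy).
$$
The diagonal part $\E\int_X (D_xF)^2 (D_xL^{-1}F)^2\,\mu(dx)$ is bounded, by Cauchy--Schwarz together with the Mehler contractivity $\E(D_xL^{-1}F)^4 \leq \E(D_xF)^4$, by $A_3$. In the off-diagonal part one expands $D_y\bigl(D_xF\,\abs{D_xL^{-1}F}\bigr)$ with the product rule $D_y(GH) = (D_yG)H + G(D_yH) + (D_yG)(D_yH)$, which brings in the second difference $D_{x,y}^2F$; estimating the resulting terms by Cauchy--Schwarz and Mehler contractivity produces the combinatorial constants $6$ and $3$ and bounds this part by $A_4 + A_5$. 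Subadditivity of the square root then gives $T_2 \leq \sqrt{A_3} + \sqrt{A_4 + A_5}$, and inserting the two bounds into the displayed consequence of Theorem~\ref{Poissonmain} finishes the proof.

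The step I expect to be the main obstacle is the verification of these two second-order Poincar\'e estimates: controlling $-DL^{-1}F$ forces one through Mehler's formula, and both the variance bound and the isometry expansion demand careful bookkeeping of the first and second difference operators as well as of the contractivity used to trade the $L^{-1}$-derivatives for plain derivatives. Since exactly these estimates are already available in \cite{LPS16,LRPY20}, the essential content here is the observation that they combine with the non-uniform prefactor of Theorem~\ref{Poissonmain} without any modification.
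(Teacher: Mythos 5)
Your proposal is correct and follows essentially the same route as the paper: apply Theorem \ref{Poissonmain} verbatim and then invoke the two estimates $\bigl(\E ( 1 - \langle DF , - DL^{-1} F \rangle )^2 \bigr)^{1/2} \leq 2\sqrt{A_1}+\sqrt{A_2}$ and $\bigl(\E ( \delta ( DF \abs{D L^{-1} F} ))^2 \bigr)^{1/2} \leq \sqrt{A_3}+\sqrt{A_4+A_5}$ from the proof of \cite[Theorem~1.2]{LPS16}. The additional sketch you give of how those second-order Poincar\'e estimates are established (Mehler's formula, the Skorohod isometry, the product rule for the difference operator) is consistent with the cited literature but is not needed, since the paper, like you, simply cites these bounds rather than rederiving them.
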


\begin{proof}
In the proof of \cite[Theorem~1.2]{LPS16} it is shown that
\begin{align*}
\prth{\mathbb{E} \prth{ 1 - \langle DF , - DL^{-1} F \rangle }^2 }^{1/2} & \leq  2 \sqrt{A_1} + \sqrt{A_2}, \\
\prth{\mathbb{E} \prth{  \delta \prth{ DF \abs{D L^{-1} F} }}^2 }^{1/2} & \leq \sqrt{A_3} + \sqrt{A_4  + A_5}.
\end{align*}
Now the Theorem follows with Theorem \ref{Poissonmain}.
\end{proof}

\begin{remark}
Compared to the bound in  \cite[Theorem1.2]{LPS16}, we remark that the third and the fourth term
$$
\gamma_3 := \int \E|D_xF|^3 \mu(dx),
$$
$$
\gamma_4 := \frac 12 ( \E F^4)^{1/4} \int \bigl[ \E (D_x F)^4 \bigr]^{3/4} \mu(dx),
$$
there do not appear in our result. In fact the two terms were dominating in some results. Removing these terms thus lead to an improvement.
\end{remark}

\noindent
The same is true in the Rademacher setting of Section \ref{subRademacher}.

\begin{theorem}[Non-uniform Berry-Esseen bound for Rademacher functionals, second version] \label{Rademachermain2}
	Let $F \in \D^{1,2}$ with $\mathbb{E}[F] = 0, \Var(F) = 1$ and $\mathbb{E}[F^{2k}] < C$ for fixed $k \in \N$. Further
	\begin{align*}
		F f_z(F) + \Unens{F>z} \in \D^{1,2} \quad \forall z \in \R,
	\end{align*}
	\begin{align*}
		\frac{1}{\sqrt{pq}} DF \abs{D L^{-1} F}  \in Dom(\delta).
	\end{align*}
	Then, for any $z \in \R$,
	\begin{align*}
		\abs{\Pp(F \leq z) - \Phi(z)} \leq \frac{C}{(1 + \abs{z})^k} \prth{\frac{\sqrt{15}}{2} \sqrt{B_1} + \frac{\sqrt{3}}{2} \sqrt{B_2} + 2 \sqrt{B_3} + 2 \sqrt{6} \sqrt{B_4} + 2 \sqrt{3} \sqrt{B_5} },
	\end{align*}
where
\begin{align*}
B_1 & := \sum_{j,k,l \in \N} \sqrt{\mathbb{E}[(D_j F)^2 (D_k F)^2]} \sqrt{\mathbb{E}[(D_l D_j F)^2 (D_l D_k F)^2]}, \\
B_2 & := \sum_{j,k,l \in \N} \frac{1}{p_l q_l} \mathbb{E}[(D_l D_j F)^2 (D_l D_k F)^2], \\
B_3 & := \sum_{k \in \N} \frac{1}{p_k q_k} \mathbb{E}[(D_k F)^4], \\
B_4 & := \sum_{k,l \in \N} \frac{1}{p_k q_k} \sqrt{\mathbb{E}[(D_k F)^4]} \sqrt{\mathbb{E}[(D_l D_k F)^4]}, \\
B_5 & := \sum_{k,l \in \N} \frac{1}{p_k q_k p_l q_l} \mathbb{E}[(D_l D_k F)^4],
\end{align*}
and $C$ is a constant depending on $k \in \N$.
\end{theorem}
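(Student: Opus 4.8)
The plan is to follow verbatim the template used for the Poisson case in the proof of Theorem \ref{Poissonmain2}. The non-uniform factor $\frac{C}{(1+|z|)^k}$ has already been isolated in Theorem \ref{Theorem:NU_Rademacher}, so nothing further needs to be done with the Stein-equation fragments or with Lemma \ref{lemma}. All that remains is to bound the two Malliavin--Stein quantities
$$
\prth{\mathbb{E} \prth{ 1 - \langle DF , - DL^{-1} F \rangle }^2 }^{1/2}
\quad\text{and}\quad
\prth{\mathbb{E} \prth{  \delta\prth{ \frac{1}{\sqrt{pq}} DF \abs{D L^{-1} F} } }^2 }^{1/2}
$$
by the second-order quantities $B_1,\dots,B_5$, and then substitute these estimates into the conclusion of Theorem \ref{Theorem:NU_Rademacher}.

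First I would invoke the second-order Poincar\'e inequality for Rademacher functionals, i.e.\ the discrete analogue of \cite[Theorem~1.2]{LPS16} in the non-symmetric setting treated in \cite{KRT17}. After applying Mehler's formula to rewrite $-DL^{-1}F$ and then splitting term by term with the triangle and Cauchy--Schwarz inequalities, this yields
\begin{align*}
\prth{\mathbb{E} \prth{ 1 - \langle DF , - DL^{-1} F \rangle }^2 }^{1/2} &\leq \frac{\sqrt{15}}{2}\sqrt{B_1} + \frac{\sqrt{3}}{2}\sqrt{B_2}, \\
\prth{\mathbb{E} \prth{  \delta\prth{ \frac{1}{\sqrt{pq}} DF \abs{D L^{-1} F} } }^2 }^{1/2} &\leq 2\sqrt{B_3} + 2\sqrt{6}\,\sqrt{B_4} + 2\sqrt{3}\,\sqrt{B_5}.
\end{align*}
The weights $1/(p_kq_k)$ entering $B_2,\dots,B_5$ are exactly the footprint of the factor $\frac{1}{\sqrt{pq}}$ carried by the divergence term in the non-symmetric Rademacher chaos, so they are forced on us by the structure of the fragment being estimated. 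Plugging the two displayed bounds into Theorem \ref{Theorem:NU_Rademacher} and collecting the five resulting terms with their constants then gives the claimed inequality, the common prefactor $\frac{C}{(1+|z|)^k}$ being untouched; this closes the proof.

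The main obstacle is not analytic but bookkeeping: one must check that the cited second-order Poincar\'e bound is stated for precisely the two fragments appearing in Theorem \ref{Theorem:NU_Rademacher} --- in particular with the $\frac{1}{\sqrt{pq}}$-weighting inside the divergence --- and that the numerical constants $\tfrac{\sqrt{15}}{2},\ \tfrac{\sqrt{3}}{2},\ 2,\ 2\sqrt{6},\ 2\sqrt{3}$ track correctly through the term-by-term splitting. Should the reference use a different grouping of $B_4$ and $B_5$ or absorb the factors $p_kq_k$ differently, one would simply re-derive the split directly, but the argument is otherwise entirely routine and identical in spirit to the Poisson computation behind Theorem \ref{Poissonmain2}.
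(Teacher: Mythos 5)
Your proposal is correct and follows essentially the same route as the paper: the paper's proof likewise consists of quoting the two second-order Poincar\'e estimates
$\prth{\mathbb{E} \prth{ 1 - \langle DF , - DL^{-1} F \rangle }^2 }^{1/2} \leq \frac{\sqrt{15}}{2}\sqrt{B_1} + \frac{\sqrt{3}}{2}\sqrt{B_2}$ and
$\prth{\mathbb{E} \prth{ \delta\prth{ \frac{1}{\sqrt{pq}} DF \abs{D L^{-1} F} } }^2 }^{1/2} \leq 2\sqrt{B_3} + 2\sqrt{6}\sqrt{B_4} + 2\sqrt{3}\sqrt{B_5}$ and inserting them into Theorem~\ref{Theorem:NU_Rademacher}. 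The only adjustment is the reference: these bounds, with exactly the constants you list, come from the proof of \cite[Theorem~4.1]{ERTZ22}, not from \cite{KRT17}.
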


\begin{proof}[Proof of Theorem~\ref{Rademachermain2}]
In the proof of \cite[Theorem~4.1]{ERTZ22} it is shown that
\begin{align*}
\prth{\mathbb{E} \prth{ 1 - \langle DF , - DL^{-1} F \rangle }^2 }^{1/2} & \leq \frac{\sqrt{15}}{2} \sqrt{B_1} + \frac{\sqrt{3}}{2} \sqrt{B_2}, \\
\prth{\mathbb{E} \prth{  \delta\prth{ \frac{1}{\sqrt{pq}} DF \abs{D L^{-1} F} } }^2 }^{1/2} & \leq 2 \sqrt{B_3} + 2 \sqrt{6} \sqrt{B_4} + 2 \sqrt{3} \sqrt{B_5}.
\end{align*}
Now the Theorem follows with Theorem \ref{Theorem:NU_Rademacher}.
\end{proof}
\bigskip

\section{Applications} \label{Examples}

\subsection{Fractional Brownian motion}
In our first application we consider the situation of an isonormal Gaussian process as in subsection \ref{subGauss}.
A fractional Brownian motion $B^H=(B_t^H : t \geq 0)$ with Hurst index $0 < H <1$ is a continuous-time centered Gaussian process with covariance
$$
\E [ B_t^H, B_s^H ] = \frac 12 (t^{2H} + s^{2H} -|t-s|^{2H}), \quad s,t \geq 0,
$$
see \cite{Nourdin} for details and background material. 
It is known (see \cite[Theorem 2.1]{Nourdin}) that
$$
S_n := \sum_{k=0}^{n-1} \bigl( B^H_{\frac{k+1}{n}} - B^H_{\frac kn} \bigr)^2, \quad n \geq 1,
$$
is such that $n^{2H-1} S_n$ converges in probability to 1, so that
$$
\hat{H}_n = \frac 12 - \frac{\log S_n}{2 \log n}, \quad n \geq 2,
$$
is a reasonable estimator for the Hurst index. We now define
$$
F_n := \frac{n^{2H}}{\sigma_n} \sum_{k=0}^{n-1} \big[ \bigl( B^H_{\frac{k+1}{n}} - B^H_{\frac kn} \bigr)^2 - n^{-2H} \bigr], \quad n \geq 1,
$$
where $\sigma_n>0$ is chosen such that $\E(F_n^2)=1$. Again the behaviour of $F_n$ controls the error of the estimator $\hat{H}_n$. Notice that
by the selfsimilarity of the fractional Brownian motion $F_n$ has the same law as 
$$
F_n = \frac{1}{\sigma_n} \sum_{k=0}^{n-1} \big[ \bigl( B^H_{k+1} - B^H_{k} \bigr)^2 - 1 \bigr].
$$
Now there exists a stochastic representation of the fractional Brownian motion as a Wiener integral of a two-sided Brownian motion $W$, 
meaning that $\{ B_{k+1}^H - B_k^H, k \in \N \}$ is equal in law to $\{ \int_{\R} e_k(s) dW_s, k \in \N \}
= \{ I_1^W(e_k), k \in \N \}$ for certain $(e_k)_k$, see \cite[Proposition 2.3]{Nourdin}. It follows the representation $F_n = I_2^W(f_n)$ with $f_n = \frac{1}{\sigma_n} \sum_{k=0}^{n-1} e_k \otimes e_k$, where $e_k \otimes e_k$ denotes the tensor product. Now asymptotic normality for $F_n$ together with rates of convergence for the Kolmogorov distance has been investigated, see \cite[Theorem 6.3]{Nourdin}. The result in \cite{Nourdin} was given in terms of the total variation distance,
but with the help of \cite[Theorem 5.1.3]{NP12} the result is the same for both distances, beside a factor 2. In fact, while for $H > 3/4$, the sequence $(F_n)_n$ does not satisfy a central limit theorem, for $0<H\leq 3/4$ is holds that
$$
\sup_{z \in \R} | \Pp(F_n \leq z) - \Phi(z)|  \leq A_n, \quad n \geq 2,
$$
and $A_n$ is given by
\begin{equation}
A_n = c_H \times \, \left\{ \begin{array}{ll} \frac{1}{\sqrt{n}} & \mbox{:} \,\, 0 < H < \frac 58 \\ \frac{(\log n)^{3/2}}{\sqrt{n}} & \mbox{:} \,\, H = \frac 58 \\
\frac{1}{n^{3-4H}} & \mbox{:} \,\, \frac 58 < H < \frac 34 \\
\frac{1}{\log n} & \mbox{:} \,\, H = \frac 34 
\end{array} \right.
\end{equation}
with a constant $c_H$ only depending on $H$. As a consequence, one can show that for $0 < H \leq \frac 34$ both of the rescaled 
random variables $\sqrt{n} (n^{2H-1} S_n -1)$ and $\sqrt{n} \log n (\hat{H}_n -H)$ are, as $n \to \infty$, normally distributed with explicitly known limiting variances, see \cite[Section 6.4]{Nourdin}.

With \cite[Theorem 5]{STx} we know that $c_n=c_n(q,f_n)$ in \eqref{conGauss}, given a Wiener-It\^o integral $I_q(f_n)$, is given by
$$
c_n(q,f_n) = \biggl( q^{3q/2} \bigl( \max_{r=1, \ldots, q-1} \| f_n \otimes_r f_n\|_{\mathcal H^{\otimes 2(q-r)}} \bigr)^{\alpha(q)} \biggr)^{-1}.
$$
Here $\otimes_r$ stands for the $r$th contraction operator, see \cite[Definition 5.4]{Nourdin}. Moreover $\alpha(q) = \frac{q+2}{3q+2}$, if $q$ is even, and
$\alpha(q) = \frac{q^2-q-1}{q(3q -5)}$, if $q$ is odd. In our situation we have $q=2$, hence for $f_n = \frac{1}{\sigma_n} \sum_{k=0}^{n-1} e_k \otimes e_k$ we obtain
$$
c_n(2,f_n) = \bigl( 8 \|f_n \otimes f_n \|_{\mathcal H^{\otimes 2}}^{1/2} \bigl)^{-1}.
$$
From the proof of \cite[Theorem 6.3]{Nourdin} we obtain that  $\|f_n \otimes f_n \|_{\mathcal H^{\otimes 2}} \leq \frac{1}{\sqrt{8}} A_n$.
Hence $c_n(2,f_n) \leq 2^{-9/4} A_n^{-1/2}$ and with \cite[Corollary 2]{STx} we obtain
$$
 \Pp \bigl( | F_n | \geq z \bigr) \leq 2 \exp \biggl( - \frac 14 \min \bigg\{ \frac{z^2}{2}, (2^{-9/4} A_n^{-1/2}  z ) \bigg\} \biggr).
$$
Hence we have proved:

\begin{theorem}[Non-uniform Berry-Esseen bound for a discretized version of the quadratic variation of a fractional BM]
	In the setting above we have
	\begin{align*}
		\abs{\Pp(F_n \leq z) - \Phi(z)} \leq \biggl( \sqrt{2} \exp \biggl( - \frac 18 \min \bigg\{ \frac{z^2}{8}, (2^{-13/4} A_n^{-1/2}  z ) \bigg\} \biggr) + c \exp(-z^2/4) \biggr) A_n.
\end{align*}
\end{theorem}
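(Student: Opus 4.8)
The plan is to bypass the polynomial prefactor $C/(1+|z|)^k$ supplied by Theorem \ref{Gaussianmain} and instead invoke the sharper estimate recorded in the Remark immediately following it, feeding into it the explicit concentration inequality for $F_n$ established just above the statement. The whole argument is an assembly of already-proved facts; the only real work is tracking constants.

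First I would recall what the proof of Theorem \ref{Gaussianmain} actually produces \emph{before} it is weakened to a polynomial bound. For $z>0$, the three-piece decomposition of $f'_z(F_n)$ together with Cauchy--Schwarz gives
$$
|\Pp(F_n \leq z) - \Phi(z)| \leq \bigl( C e^{-z^2/4} + \sqrt{\Pp(F_n > z/2)} \bigr) \, \E \bigl( (1 - \langle DF_n, -DL^{-1}F_n \rangle_{\mathcal H})^2 \bigr)^{1/2},
$$
which is exactly the improved prefactor quoted in the Remark. Since $F_n = I_2^W(f_n)$ lives in a single Wiener chaos, the fourth-moment identity identifies the Malliavin factor with $\sqrt{\tfrac{q-1}{3q}(\E(F_n^4)-3)}$ at $q=2$, and by the fourth-moment theorem this is dominated by the uniform rate $A_n$ (which is defined precisely as such a bound). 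Hence the estimate collapses to
$$
|\Pp(F_n \leq z) - \Phi(z)| \leq \bigl( C e^{-z^2/4} + \sqrt{\Pp(F_n > z/2)} \bigr) A_n .
$$

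Second, I would insert the concentration inequality for $F_n$ derived just before the statement, namely $\Pp(|F_n| \geq t) \leq 2\exp(-\tfrac14 \min\{t^2/2,\, 2^{-9/4}A_n^{-1/2}t\})$. Evaluating at $t=z/2$ and using $\Pp(F_n > z/2) \leq \Pp(|F_n| \geq z/2)$, the elementary arithmetic $(z/2)^2/2 = z^2/8$ and $2^{-9/4}\cdot(z/2) = 2^{-13/4}z$ yields
$$
\sqrt{\Pp(F_n > z/2)} \leq \sqrt{2}\,\exp\biggl( -\frac18 \min\Bigl\{ \frac{z^2}{8}, \, 2^{-13/4}A_n^{-1/2} z \Bigr\} \biggr).
$$
Substituting this into the previous display and renaming the constant in front of $e^{-z^2/4}$ as $c$ reproduces the asserted bound for every $z>0$.

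Finally, the case $z \leq 0$ is handled exactly as in the proof of Theorem \ref{Gaussianmain}: by the symmetry of the standard normal law together with $\langle D(-F_n), -DL^{-1}(-F_n)\rangle_{\mathcal H} = \langle DF_n, -DL^{-1}F_n\rangle_{\mathcal H}$, the problem at $z \leq 0$ reduces to the already-settled problem at $-z > 0$ for $-F_n$, which has the same law as $F_n$. The only point that demands genuine care anywhere is the bookkeeping of the numerical constants $2^{-9/4}$ and $2^{-13/4}$ and the factors of $\tfrac14$ versus $\tfrac18$ produced by taking the square root of the concentration bound; there is no analytic obstacle beyond this accounting.
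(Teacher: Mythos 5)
Your proposal is correct and is essentially the paper's own (very terse) argument made explicit: the paper likewise combines the improved prefactor $C e^{-z^2/4} + \sqrt{\Pp(F_n > z/2)}$ from the Remark after Theorem \ref{Gaussianmain}, the bound of the Malliavin factor by $A_n$ (via $\|f_n \otimes_1 f_n\| \leq A_n/\sqrt{8}$ from the proof of Nourdin's Theorem 6.3), and the concentration inequality evaluated at $z/2$, with exactly your constant bookkeeping $(z/2)^2/2 = z^2/8$, $2^{-9/4}(z/2) = 2^{-13/4}z$, and the square root producing $\sqrt{2}$ and the exponent $\tfrac18$. One inessential slip: $-F_n = I_2^W(-f_n)$ does \emph{not} have the same law as $F_n$ (second-chaos variables are not symmetric in distribution), but your reduction for $z \leq 0$ survives because the Malliavin factor and the two-sided concentration bound are invariant under $F_n \mapsto -F_n$, and in any case for $z \leq 0$ the asserted right-hand side exceeds $\sqrt{2}\,A_n$, so the claim there already follows from the uniform estimate.
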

\medskip

\begin{remark}
In \cite{STx}, two other examples are presented, where the sequence $c_n(q,f_n)$ was calculated explicitly. This is Theorem 7, were
explosive integrals of a Brownian sheet are represented as a Wiener-It\^o integral of degree 2 with respect to the Brownian sheet. Another example
is presented in \cite[Theorem 10]{STx}, where the sample bispectrum of a spherical Gaussian random field  is represented as an element of the third Wiener chaos.
For both examples, we would be able to present the corresponding non-uniform Berry-Esseen bound. We skip the details.
\end{remark}
\medskip

\subsection{Subgraph counts in random geometric graphs}
In our second application we consider the situation of a Poisson process as in subsection \ref{subPoisson}. Let $(t_n)_n$ be a positive real-valued sequence such that $t_n \to \infty$, as $n \to \infty$, and let $(r_n)_n$ be a positive real-valued sequence. Fix a convex, compact set $W \subset \R^d$ with interior points, and let, for each $n \in \N$, $\eta_n$ be a Poisson process whose intensity measure $\mu_n$ is $t_n$ times the restriction of the Lebesgue measure to $W$. The random geometric
graph $RGG(\eta_n, r_n)$ is constructed by taking the points of $\eta_n$ as the vertices of the graph and by connecting two distinct points by an edge whenever their Euclidean distance is strictly positive and does not exceed the given threshold $r_n$. We are interested to count subgraphs. Let $G$ be a fixed connected graph
with $q$ vertices, and for $x_1, \ldots, x_q \in W$ we let $f(x_1, \ldots, x_q, G, r_n)$ be $1/q!$ times the number of subgraphs of $RGG(\{x_1, \ldots, x_q\},r_n)$
that are isomorphic to $G$. Now the subgraph counting statistic is given by
$$
S_n(G) := \sum_{(x_1, \ldots, x_q) \in \eta_{n, \not=}^q} f(x_1, \ldots, x_q, G, r_n),
$$
where $ \eta_{n, \not=}^q$ stands for the set of all $q$-tuples of distinct points of $\eta_n$. $S_n(G)$ is the number of non-induced copies of $G$ in $RGG(\eta_n,r_n)$.
Now a concentration inequality of $(S_n(G))_n$ can be discussed. We will have the following choice of $c(q,f,\alpha_n)$ in \eqref{conPoisson}. 
In the model it can be proved that there
exists a constant $v>0$ such that
$$
\Var(S_n(G)) \geq v \, \max \{ t_n^{q-1} (\kappa_d r_n^d)^{2q-2}, t_n^q (\kappa_d r_n^d)^{q-1} \, \}, \quad n \in \N.
$$
Here $\kappa_d$ denotes the volume of the Euclidean unit ball in $\R^d$.
Now the sequence $c(q,f,\alpha_n)$ is given in terms of $v, t_n, r_n, q$ and $\Vol(W)$ by
$$
c(q,f,\alpha_n) = \frac{\sqrt{v t_n \min\{1, t_n \kappa_d r_n^d\}^{q-1}}}{q^{3q} \max\{1, \Vol(W)/v \, \} }, \quad n \in \N,
$$
see Corollary 4.2 and Theorem 3.2 in \cite{ST23}.
With \eqref{conPoisson} we will be able to bound 
$$ 
\sqrt{ \Pp (S_n(G) > z/2)} + c e^{-x^2/4},
$$ the factor as a function in $z$ in front of the
Berry-Essen bound. We have proved:

\begin{theorem}[Non-uniform Berry-Esseen bound for subgraph counts in random geometric graphs]
	In the setting above we have
	\begin{eqnarray*}
		\abs{\Pp(S_n(G) \leq z) - \Phi(z)} & \leq & \biggl( \sqrt{2} \exp \biggl( - \frac 18 \min \bigg\{ \frac{z^2}{2^{q+2}}, \frac{z^{1/q}}{2^{1/q}} \biggl(
		\frac{\sqrt{v t_n \min\{1, t_n \kappa_d r_n^d\}^{q-1}}}{q^{3q} \max\{1, \Vol(W)/v \, \} }\biggr)^{1/q}  \bigg\} \biggr) \\
		 &  &  \hspace{2cm} + c e^{-z^2/4} \biggr) A_n,
\end{eqnarray*}
where $A_n$ is the known Berry-Esseen rate of convergence.
\end{theorem}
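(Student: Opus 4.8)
The plan is to obtain the non-uniform bound directly from Theorem~\ref{Poissonmain}, applied to the standardized statistic $F := S_n(G)$ (centered and rescaled to unit variance), and then to feed in the model-specific concentration inequality. First I would invoke the sharper form of the prefactor recorded in the Remark following Theorem~\ref{Poissonmain}: for $z>0$ the factor $\frac{C}{(1+|z|)^k}$ may be replaced by
$$
c\, e^{-z^2/4} + \sqrt{\Pp(S_n(G) > z/2)}.
$$
Multiplying this prefactor by the Malliavin--Stein quantity
$$
\prth{\mathbb{E} \prth{ 1 - \langle DF , - DL^{-1} F \rangle }^2 }^{1/2} + \prth{\mathbb{E} \prth{  \delta\prth{  DF \abs{D L^{-1} F} } }^2 }^{1/2}
$$
yields the non-uniform estimate. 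The key observation is that this Malliavin--Stein quantity is, up to the universal constant absorbed into Theorem~\ref{T2}, exactly the uniform Berry--Esseen rate for $S_n(G)$; hence it is controlled by the known rate $A_n$, which I would simply substitute for it.

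It then remains only to bound $\sqrt{\Pp(S_n(G)>z/2)}$. For this I would apply the concentration inequality \eqref{conPoisson}, which is available for the Poisson $U$-statistic $S_n(G)$ through the $U$-statistic version \cite[Theorem~3.2]{ST23}, with the explicit constant
$$
c(q,f,\alpha_n) = \frac{\sqrt{v t_n \min\{1, t_n \kappa_d r_n^d\}^{q-1}}}{q^{3q} \max\{1, \Vol(W)/v \, \} }
$$
supplied by the stated variance lower bound together with \cite[Corollary~4.2]{ST23}. Evaluating \eqref{conPoisson} at level $z/2$ and taking a square root, the routine arithmetic
$$
\frac{(z/2)^2}{2^{q}} = \frac{z^2}{2^{q+2}}, \qquad \bigl(c(q,f,\alpha_n)\, z/2\bigr)^{1/q} = \frac{z^{1/q}}{2^{1/q}}\, c(q,f,\alpha_n)^{1/q}
$$
turns the exponent into precisely the one appearing in the claim, while the leading factor $2$ in \eqref{conPoisson} becomes $\sqrt{2}$ after taking the square root. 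Collecting the two contributions to the prefactor and multiplying by $A_n$ delivers the asserted inequality for $z>0$; since the target bound involves $z^2$ and $z^{1/q}$, this is the only regime that needs to be treated, the case $z\le 0$ being covered as in the symmetry argument at the end of the proofs of Theorems~\ref{Gaussianmain} and~\ref{Poissonmain}.

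I expect the only genuine input — already established in the cited references and summarized in the setup — to be the verification that $S_n(G)$ meets the cumulant condition \cite[Condition~(3.1)]{ST23} with the stated sequence $(\alpha_n)$, so that \eqref{conPoisson} is legitimately available with the displayed $c(q,f,\alpha_n)$; this rests on the variance lower bound $\Var(S_n(G)) \geq v \max\{ t_n^{q-1}(\kappa_d r_n^d)^{2q-2}, t_n^q (\kappa_d r_n^d)^{q-1}\}$ and on the $U$-statistic structure of the count. Once that ingredient is granted, the remainder is a direct substitution, so the main obstacle is bookkeeping — matching exponents and constants between \eqref{conPoisson} and the target bound — rather than any new estimate.
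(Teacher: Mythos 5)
Your proposal is correct and follows essentially the same route as the paper: the paper also combines the improved prefactor $c\,e^{-z^2/4} + \sqrt{\Pp(S_n(G) > z/2)}$ from the remark after Theorem~\ref{Poissonmain} with the Poisson $U$-statistic concentration inequality \eqref{conPoisson}, using the explicit constant $c(q,f,\alpha_n)$ obtained from the variance lower bound via Corollary~4.2 and Theorem~3.2 of \cite{ST23}, evaluates it at level $z/2$, and multiplies the resulting prefactor by the known uniform rate $A_n$. Your bookkeeping ($\frac{(z/2)^2}{2^{q}} = \frac{z^2}{2^{q+2}}$, the factor $2^{-1/q}$, and $2 \mapsto \sqrt{2}$ after taking the square root) is exactly the computation implicit in the paper's derivation.
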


\begin{remark}
For bounds in normal approximation for the number of non-induced copies of a given graph $G$, see \cite[Theorem 3.3]{LRP13} and 
\cite[Chapter 3]{Penrose}.
\end{remark}

\begin{remark}
In \cite{ST23} we can find some more examples of Poisson $U$-statistics in stochastic geometry as well as the model of functionals
of Ornstein-Uhlenbeck-L\'evy processes. In all these examples, concentration inequalities of exponential type are available. We skip the details.
\end{remark}
\medskip

\subsection{Infinte weighted 2-runs}
In our next application we consider the situation of an underlying sequence of Rademacher random variables as in subsection \ref{subRademacher}.
 Let $X = (X_i)_{i \in \Zz}$ be a double sided sequence of \textit{iid} Rademacher random variables such that $\Pp(X_i = 1) = \Pp(X_i = -1) = \frac{1}{2}$ and let for each $n \in \N, (a^{(n)}_i)_{i \in \Zz}$ be a double-sided square-summable sequence of real numbers.
\\ The sequence $(F_n)_{n \in \N}$ of standardized infinite weighted 2-runs is then defined as
\begin{align*}
F_n := \frac{G_n - \mathbb{E}[G_n]}{\sqrt{\Var(G_n)}}, \quad G_n := \sum_{i \in \Zz} a^{(n)}_i \xi_i \xi_{i+1}, \quad n \in \N,
\end{align*}
where $\xi_i := \frac{X_i + 1}{2}$ for $i \in \Zz$. Since it is often nice to work with centered random variables we rewrite $F_n$ as
\begin{align*}
	F := F_n = \frac{1}{\sqrt{\Var(G_n)}} \sum_{i \in \Zz} a^{(n)}_i \left[\xi_i \xi_{i+1} - \frac{1}{4}\right] = \frac{1}{4 \sqrt{\Var(G_n)}} \sum_{i \in \Zz} a^{(n)}_i \left[X_i  + X_i X_{i+1} + X_{i+1}\right].
\end{align*}
We recall further $\norm{a^{(n)}}_{l^p(\Zz)} := \prth{\sum_{i \in \Zz} \abs{a_i}^p}^{1/p} < \infty \, \forall p \geq 2$ as $(a^{(n)}_i)_{i \in \Zz} \in l^2(\Zz)$, as well as $\mathbb{E}[X^k] = 1$ for $k$ even and $\mathbb{E}[X^k] = 0$ for $k$ odd, and $\Var(G_n)  = O\prth{\norm{a^{(n)}}_{l^2(\Zz)}^2}$. We want to apply Theorem~\ref{Theorem:NU_Rademacher} for $k=3$, so we have to show that
\begin{align}\label{SixthMomentTwoRuns}
	\mathbb{E}[F_n^6] = \frac{1}{4^6 (\Var(G_n))^3} \sum_{(i,j,k,l,m,r) \in \Zz^6} a^{(n)}_i a^{(n)}_j a^{(n)}_k a^{(n)}_l a^{(n)}_m a^{(n)}_r \mathbb{E}\prth{A_i\cdot \ldots \cdot A_r} < \infty,
\end{align}
where $A_i = \left[X_i  + X_i X_{i+1} + X_{i+1}\right]$. If we compute $A_i \cdot \ldots \cdot A_r$ for a fixed index $(i,j,k,l,m,r)$, we get $3^6$ summands in total. Among these summands the following cases are possible (apart from the designation of the indices, e.g. $i$ or $i+1$): 
\begin{enumerate}
	\item[(i)] 6 single X's, 0 pairs, so 6 X's in total,
	\item[(ii)] 5 single X's, 1 pair, so 7 X's in total,
	\item[(iii)] 4 single X's, 2 pairs, so 8 X's in total,
	\item[(iv)] 3 single X's, 3 pairs, so 9 X's in total,
	\item[(v)] 2 single X's, 4 pairs, so 10 X's in total,
	\item[(vi)] 1 single X, 5 pairs, so 11 X's in total,
	\item[(vii)] 0 single X's, 6 pairs, so 12 X's in total
\end{enumerate}
and by a \textit{pair} we denote a term of the form $X_i X_{i+1}$.
\\ Case (i): When is $\mathbb{E}[X_i X_j X_k X_l X_m X_r] \neq 0$? Since the odd moments of $X$ are equal to 0 and the random variables are independent this happens if and only if the numbers of equal indices are even, namely
\begin{itemize}
	\item 6 equal indices, e.g. $\mathbb{E}[X_i^6] = 1$,
	\item 4 and 2 equal indices, e.g. $\mathbb{E}[X_i^4] \cdot \mathbb{E}[X_i^2] = 1$,
	\item 2 and 2 and 2 equal indices, e.g. $\mathbb{E}[X_i^2] \cdot \mathbb{E}[X_j^2] \cdot \mathbb{E}[X_k^2] = 1$.
\end{itemize}
By our usual argumentation with AGM-inequality (the indices of $a^{(n)}$ and $X$ do not have to be exactly the same but up to a natural number), the corresponding subterms can be estimated by
\begin{align*}
	\frac{C \norm{a^{(n)}}_{l^6(\Zz)}^6}{(\Var(G_n))^3} = O(1)
\end{align*}
and
\begin{align*}
     \frac{C \norm{a^{(n)}}_{l^4(\Zz)}^4 \norm{a^{(n)}}_{l^2(\Zz)}^2}{(\Var(G_n))^3} = O(1)
\end{align*}
and
\begin{align*}
	\frac{C \norm{a^{(n)}}_{l^2(\Zz)}^6}{(\Var(G_n))^3} = O(1),
\end{align*}
where we also used $\Var(G_n)  = O\prth{\norm{a^{(n)}}_{l^2(\Zz)}^2}$ and $\norm{a^{(n)}}_{l^m(\Zz)}^m \leq \norm{a^{(n)}}_{l^{m-1}(\Zz)}^{m-1} \cdot C \cdot \Var(G_n))^{1/2}$ from section 4 in \cite{BER23}.
\\ Case (ii): Since $i \neq i+1$ it is $\mathbb{E}[X_i X_{i+1}X_j X_k X_l X_m X_r] = 0$ for any index combination of this type.
\\ Case (iii): Since $i \neq i+1$ and $j \neq j+1$ there can not be more than six equal indices in $\mathbb{E}[X_i X_{i+1}X_j X_{j+1}X_k X_l X_m X_r]$ for any index combination of this type. In other words, equal indices are from (up to six) different index sets. Note further that although the number of $X's$ increases with every case, the number of ${a^{(n)}}'s$ always stays at six. So up to a certain degree we have a freedom of choice how we construct our $\norm{a^{(n)}}_{l^m(\Zz)}$ but $m \leq 6$. Basically our bound will be a mixture of the norms appearing in case (i).
\\ Cases (iv) and (vi): analogue to (ii)
\\ Cases (v) and (vii): analogue to (iii)
\\ Putting all the cases together we have shown \eqref{SixthMomentTwoRuns}. We refer to the uniform bound in \cite[Theorem~1.1]{ERTZ22} to write down our result:
\begin{theorem}[Non-uniform Berry--Esseen bound for 2-runs]\label{Theorem:NU_TwoRuns}
	In the setting of infinite weighted 2-runs from above we have
	\begin{align*}
		\abs{\Pp(F \leq z) - \Phi(z)} \leq \frac{C}{\prth{1 + \abs{z}}^3} \prth{\frac{\norm{a^{(n)}}_{l^4(\Zz)}^2}{\norm{a^{(n)}}_{l^2(\Zz)}^2} },
	\end{align*}
and $C$ is a constant depending on the coefficient sequence $(a^{(n)}_i)_{i \in \Zz}$.
\end{theorem}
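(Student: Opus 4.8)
The plan is to read off the result as a direct application of the non-uniform bound of Theorem~\ref{Theorem:NU_Rademacher} with $k=3$, using for the bracketed factor the uniform Berry--Esseen rate for weighted $2$-runs already established in \cite[Theorem~1.1]{ERTZ22}. Concretely, once the three hypotheses of Theorem~\ref{Theorem:NU_Rademacher} are verified for the present $F=F_n$, that theorem yields
\[
\abs{\Pp(F \le z) - \Phi(z)} \le \frac{C}{(1+\abs{z})^3}\prth{ \prth{\mathbb{E}\prth{1 - \langle DF, -DL^{-1}F\rangle}^2}^{1/2} + \prth{\mathbb{E}\prth{\delta\prth{\frac{1}{\sqrt{pq}} DF\,\abs{D L^{-1}F}}}^2}^{1/2} },
\]
and it remains only to recognize the parenthesis as being of order $\norm{a^{(n)}}_{l^4(\Zz)}^2/\norm{a^{(n)}}_{l^2(\Zz)}^2$.

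The genuinely new input is the moment hypothesis $\mathbb{E}[F_n^6] < C$ uniformly in $n$, which is precisely \eqref{SixthMomentTwoRuns}. I would establish it by the expansion preceding the statement: writing $F_n$ as $1/(4\sqrt{\Var(G_n)})$ times $\sum_i a_i^{(n)} A_i$ with $A_i = X_i + X_i X_{i+1} + X_{i+1}$, the sixfold product produces $3^6$ monomials, and by independence and the vanishing of the odd moments of $X$ only those in which every index appears an even number of times survive. The case distinction (i)--(vii) above shows that each surviving monomial always carries exactly six factors $a^{(n)}$, so it is controlled by a product of $l^p$-norms with $2 \le p \le 6$; combining $\Var(G_n)=O(\norm{a^{(n)}}_{l^2(\Zz)}^2)$ with the interpolation inequality $\norm{a^{(n)}}_{l^m(\Zz)}^m \le C\,\norm{a^{(n)}}_{l^{m-1}(\Zz)}^{m-1}\,(\Var(G_n))^{1/2}$ from \cite{BER23} reduces every contribution to $O(1)$. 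This combinatorial bookkeeping, rather than any single sharp inequality, is the main obstacle.

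The two structural hypotheses, $F f_z(F) + \1{F>z} \in \D^{1,2}$ for every $z$ and $\frac{1}{\sqrt{pq}} DF\,\abs{D L^{-1}F} \in \mathrm{Dom}(\delta)$, hold because $F_n$ belongs to the sum of the first and second Rademacher chaoses with square-summable kernels, so that $DF$, $L^{-1}F$ and the relevant Skorohod integral are given by explicit convergent expressions; these are exactly the integrability conditions already checked in \cite[Theorem~1.1]{ERTZ22} for the uniform estimate, so nothing new is needed. With all hypotheses in force I would invoke Theorem~\ref{Theorem:NU_Rademacher} with $k=3$ and then substitute the uniform rate of \cite[Theorem~1.1]{ERTZ22} for the bracketed quantity, producing the prefactor $C/(1+\abs{z})^3$ multiplied by $\norm{a^{(n)}}_{l^4(\Zz)}^2/\norm{a^{(n)}}_{l^2(\Zz)}^2$, with $C$ absorbing the dependence on the coefficient sequence $(a_i^{(n)})_{i\in\Zz}$. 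This completes the proof.
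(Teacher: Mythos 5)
Your proposal is correct and follows essentially the same route as the paper: the paper's argument is precisely the verification of the uniform sixth-moment bound \eqref{SixthMomentTwoRuns} via the $3^6$-term expansion and the parity/case analysis (i)--(vii) with the norm inequalities from \cite{BER23}, followed by an application of Theorem~\ref{Theorem:NU_Rademacher} with $k=3$ and substitution of the uniform rate $\norm{a^{(n)}}_{l^4(\Zz)}^2/\norm{a^{(n)}}_{l^2(\Zz)}^2$ from \cite[Theorem~1.1]{ERTZ22}. Your additional remark that the two structural hypotheses are inherited from the verification already done in \cite{ERTZ22} is consistent with the paper, which treats them implicitly in the same way.
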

\medskip

\subsection{Subgraph counts in the Erd\H{o}s--R\'{e}nyi random graph}
In our last application we consider again a sequence of Rademacher random variables as in subsection \ref{subRademacher}.

We start with the complete graph on $n$ vertices and keep an edge with probability $p \in [0,1]$, while we remove it with probability $q := 1-p$, for all edges independently from each other. The outcome is known as the classical Erd\H{o}s--R\'{e}nyi random graph \textbf{G}(n,p) and in many applications $p$ depends on $n$. We fix a graph $G_0$ with at least one edge and consider the number $W$ of subgraphs of \textbf{G}(n,p), which are isomorphic to $G_0$. The corresponding standardized random variable is then defined as 
\begin{align*}
F & := \frac{W - \mathbb{E}[W]}{\sqrt{\Var(W)}}.
\end{align*}
For our result we have to define the important quantity
\begin{align*}
\Psi := \min_{\substack{H \subset G_0 \\  e_H \geq 1}} \left \{ n^{v_H} p^{e_H} \right \},
\end{align*}
where $v_H$ denotes the number of vertices of a subgraph $H$ of $G_0$ and $e_H$ the number of edges, respectively. In  \cite[Theorem~2]{R88} a central limit theorem for $F$ was shown and the core of the proof was the method of moments. The idea of this method is to show that the moments of $F$ converge to the moments of the standard normal distribution, which is unique determined by its moments. So, in particular all moments of $F$ are bounded and we can apply Theorem~\ref{Theorem:NU_Rademacher}. We refer to the uniform bound in \cite[Theorem~1.2]{ERTZ22} to write down our result:
\begin{theorem}[Non-uniform Berry--Esseen bound for subgraph counts]\label{Theorem:NU_ER}
	In the setting of subgraph counts in the Erd\H{o}s--R\'{e}nyi random graph from above we have
	\begin{align*}
		\abs{\Pp(F \leq z) - \Phi(z)} \leq \frac{C}{\prth{1 + \abs{z}}^k} O\prth{(q \Psi)^{-\frac{1}{2}}},
	\end{align*}
and $C$ is a constant depending on $k \in \N$ and on $G_0$.
\end{theorem}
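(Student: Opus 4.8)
The plan is to realize $F$ as an $L^2$-functional over a Rademacher sequence indexed by the potential edges of the complete graph on $n$ vertices, and then to invoke Theorem~\ref{Theorem:NU_Rademacher} with the prescribed $k$. First I would encode each potential edge $e$ by a Rademacher variable $X_e \in \{-1,+1\}$ with $\Pp(X_e = 1) = p$, so that the indicator that $e$ is present equals $(1+X_e)/2$. Then $W$, being a sum over embeddings of $G_0$ of products of such indicators, is a polynomial of degree $e_{G_0}$ in the $X_e$, and hence so is the standardized statistic $F$. In particular $F$ lies in a finite sum of Rademacher chaoses, so $F \in \D^{1,2}$. Moreover, for fixed $n$ only finitely many coordinates are active and $F$ is bounded, whence each discrete gradient $D_k F = \sqrt{p_k q_k}(F_k^+ - F_k^-)$ is bounded; this makes the two structural hypotheses $F f_z(F) + \Unens{F>z} \in \D^{1,2}$ and $\frac{1}{\sqrt{pq}} DF \abs{DL^{-1}F} \in \text{Dom}(\delta)$ of Theorem~\ref{Theorem:NU_Rademacher} automatic, since all operators act on polynomials in finitely many variables.

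The decisive hypothesis to verify is the moment bound $\mathbb{E}[F^{2k}] < C$ uniformly in $n$, in the asymptotic regime on $p = p(n)$ under which the central limit theorem holds. Here I would lean on the method of moments as carried out in \cite[Theorem~2]{R88}: the CLT for $F$ is proved precisely by showing that for every fixed integer $m$ one has $\mathbb{E}[F^m] \to \mathbb{E}[Z^m]$ as $n \to \infty$, where $Z \sim \Normal(0,1)$. Convergence of the $2k$-th moment to the finite limit $(2k-1)!!$ forces the sequence $(\mathbb{E}[F^{2k}])_n$ to be bounded, and the finitely many remaining values of $n$ contribute only finite moments; this yields the required constant $C$ depending on $G_0$ and on $k$. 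This moment estimate is the one genuinely model-specific input, and I expect it to be the main obstacle, since everything else reduces to the abstract machinery or to a reference.

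With $\mathbb{E}[F^{2k}] < C$ in hand, Theorem~\ref{Theorem:NU_Rademacher} applies directly and produces the non-uniform prefactor $C/(1+\abs{z})^k$ multiplying the sum of the two Malliavin--Stein quantities
$$
\prth{\mathbb{E}\prth{1 - \langle DF, -DL^{-1}F\rangle}^2}^{1/2} + \prth{\mathbb{E}\prth{\delta\prth{\tfrac{1}{\sqrt{pq}} DF \abs{DL^{-1}F}}}^2}^{1/2}.
$$
It then remains to bound this sum by $O((q\Psi)^{-1/2})$. This is exactly the content of the uniform Berry--Esseen estimate \cite[Theorem~1.2]{ERTZ22}, where the two terms are controlled through the quantities $B_1,\dots,B_5$ of Theorem~\ref{Rademachermain2} and the subgraph combinatorics of $G_0$ (governed by $\Psi$) produces the rate $(q\Psi)^{-1/2}$. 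Inserting this rate into the non-uniform prefactor gives the asserted bound. Finally, the case $z \le 0$ needs no separate treatment, since the prefactor is even in $z$ and Theorem~\ref{Theorem:NU_Rademacher} is already stated for all $z \in \R$.
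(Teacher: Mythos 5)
Your proposal is correct and follows essentially the same route as the paper: boundedness of $\mathbb{E}[F^{2k}]$ extracted from Ruci\'{n}ski's method-of-moments CLT \cite[Theorem~2]{R88}, followed by an application of Theorem~\ref{Theorem:NU_Rademacher} with the rate $O((q\Psi)^{-1/2})$ imported from the uniform bound \cite[Theorem~1.2]{ERTZ22}. In fact you spell out details the paper leaves implicit (the Rademacher encoding of the edge indicators and the verification of the structural hypotheses $F f_z(F) + \Unens{F>z} \in \D^{1,2}$ and $\frac{1}{\sqrt{pq}} DF \abs{DL^{-1}F} \in \mathrm{Dom}(\delta)$), so nothing is missing.
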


\begin{remark}
In \cite{ERTZ22}, some more applications were considered. The number of vertices with prescribed degree in the Erd\H{o}s-R\'enyi random graph, the number
of vertices of fixed degree for percolation on the Hamming hypercube, the number of isolated faces in the so-called Linial-Meshulam-Wallach random $\kappa$-complex
are some more applications, were we can obtain a non-uniform Berry-Esseen bound as well. We skip the details.
\end{remark}
\medskip

\end{document}